\let\mathcal \undefined
\def\mathcal{\mathscr}
\theoremstyle{plain}
\newtheorem{theorem}{Theorem}[section]
\theoremstyle{remark}
\newtheorem{definition}[theorem]{Definition}
\theoremstyle{plain}
\newtheorem{corollary}[theorem]{Corollary}
\newtheorem{lemma}[theorem]{Lemma}
\numberwithin{equation}{section}
\def\Z{{\mathbb Z}}
\def\R{{\mathbb R}}
\def\C{{\mathbb C}}
\renewcommand{\P}{{\mathbb P}}
\newcommand{\calC}{\mathscr{C}}
\renewcommand{\a}{\alpha}
\renewcommand{\b}{\beta}
\newcommand{\g}{\gamma}
\renewcommand{\d}{\delta}
\newcommand{\e}{\varepsilon}
\renewcommand{\l}{\lambda}
\newcommand{\B}{\mathscr{B}}
\newcommand{\n}{\Vert}
\newcommand{\one}{{{\mathbbm 1}}}
\newcommand{\dist}{{\rm dist}}
\newcommand{\diam}{{\rm diam}}
\newcommand{\supp}{{\rm supp}}
\newcommand{\een}{\end{enumerate}}
\newcommand{\bit}{\begin{itemize}}
\newcommand{\eit}{\end{itemize}}
\newcommand{\book}[1]{{\em #1}}
\newcommand{\journal}[1]{{\em #1}}
\newcommand{\volume}[1]{{\bfseries #1}}
\newcommand{\name}[1]{{\sc #1}}
\begin{document}

\author{Jan Maas}
\address{
Institute for Applied Mathematics\\
University of Bonn\\
Endenicher Allee 60\\
53115 Bonn\\
Germany} 
\email{maas@iam.uni-bonn.de}

\author{Jan van Neerven}
\address{
Delft Institute of Applied Mathematics\\
Delft University of Technology
\\P.O. Box 5031\\
2600 GA Delft\\
The Netherlands} \email{J.M.A.M.vanNeerven@tudelft.nl}

\author{Pierre Portal}
\address{Universit\'e Lille 1, Laboratoire Paul Painlev\'e, 59655 Villeneuve
d'Ascq, France}
\email{pierre.portal@math.univ-lille1.fr}

\title[Whitney coverings and the tent spaces $T^{1,q}(\gamma)$]{Whitney
coverings and the tent spaces $T^{1,q}(\gamma)$ for the Gaussian measure}

\begin{abstract}
We introduce a technique for handling Whitney decompositions in Gaussian
harmonic analysis and apply it to the study of Gaussian analogues of the classical tent
spaces $T^{1,q}$ of Coifman, Meyer and Stein.
\end{abstract}

\subjclass[2000]{42B35}
\keywords{Admissible balls, Whitney covering, Gaussian tent spaces, atomic
decomposition, change of aperture}

\thanks{The first named author is supported by Rubicon subsidy 680-50-0901
of the Netherlands Organisation for Scientific Research (NWO). The second named
author is supported by VICI subsidy 639.033.604
of the Netherlands Organisation for Scientific Research (NWO)}
\date\today

\maketitle

\section{Introduction}

Much of modern harmonic analysis in euclidean spaces depends upon the fact that
the Lebesgue measure is compatible with the scalar multiplication in the sense
that for any ball $B$ in $\R^n$ we have $|2B| = 2^n |B|$; here $2B$ is the ball
with the same centre and twice the radius of $B$. Indeed, many results proved
originally in the euclidean setting have been extended to metric spaces endowed
with a doubling measure $\mu$, i.e., a measure satisfying $\mu(2B)\le C\mu(B)$
for some constant $C$ depending only upon $\mu$.

It is a simple matter to verify that the standard Gaussian measure 
$\g$ on $\R^{n}$, $$d\gamma(x) =
(2\pi)^{-n/2}\exp(-\tfrac12|x|^2)\,dx,$$
is non-doubling. In their seminal paper \cite{MM}, Mauceri and Meda found a way
around this by introducing the class of {\em admissible balls}. These are the
balls $B = B(c_B,r_B)$ in $\R^n$ satisfying the smallness condition 
$$ r_B \le \min\Big\{1,\frac1{|c_B|}\Big\}.$$
Mauceri and Meda show that admissible balls enjoy a doubling condition. Armed
with this, many results from the euclidean case can be carried over to the
Gaussian case, as long as one is able to work with admissible balls only. Mauceri
and Meda were thus able to define Gaussian counterparts of the spaces $H^1$ and
$BMO$ and extend parts of the Calder\'on-Zygmund theory to the Gaussian setting. 
Some of these results have even been extended to a more general class of locally 
doubling metric 
measure spaces in \cite{CMM1, CMM2}.

Another important tool of euclidean harmonic analysis is the Whitney covering method. 
This technique allows one to cover open sets $O$ with dyadic cubes whose sizes are 
proportional
to the distance of the cube to the complement of $O$. In the Gaussian case, one
runs into the problem that admissible cubes become very small at large distances
from the origin. As a consequence, the distance of such a cube to the exterior
of a given open set is typically much larger than the size of the cube. At first
sight, this renders Whitney covering useless as a tool in the Gaussian setting.
The purpose of this note is to show how Whitney covering, too, can be adapted to
the Gaussian setting. To illustrate its usefulness, we use it to prove an
atomic decomposition theorem and a change of aperture theorem for the Gaussian
analogue of the tent space $T^{1,q}$ of Coifman, Meyer and Stein.

\section{Admissible balls and cubes}\label{sec:adm_balls}

Throughout this paper we fix the dimension $n\ge 1$.
As usual we denote by $$B(x,r) := \{y\in\R^n: |x-y|<r\}$$ the open ball in
$\R^n$ centred at $x$ with radius $r$. Following Mauceri and Meda \cite{MM} we
begin by introducing the class of admissible balls.

\begin{definition}
For $\a>0$ we define
$$\B_{\a}:=\big\{B(x,r): \  x \in \R^{n}, \ 
0\le r\le \a m(x)\big\},$$
where
$$m(x) := \min\Big\{1,\frac{1}{|x|}\Big\}, \quad x\in\R^n.$$
The balls in $\B_{\a}$ are said to be {\em admissible at scale $\a$}.
\end{definition}

It is a fundamental observation of Mauceri and Meda \cite{MM} that admissible
balls enjoy a doubling property:

\begin{lemma}[Doubling property]\label{lem:doubling} 
Let $\a, \tau > 0.$ There exists a constant $d=d_{\alpha, \tau, n}$, depending
only upon $\alpha$, $\tau$, and the dimension $n$, such that if $B_1 =
B(c_1,r_1) \in \B_{\a}$ and $B_2 = B(c_2,r_2)$ have nonempty intersection
and $r_2 \leq \tau r_1,$ then
$$ \g(B_2)\le d \g(B_1).$$
\end{lemma}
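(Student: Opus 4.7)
The plan is to show that the Gaussian density is essentially constant on $B_1\cup B_2$, which reduces the problem to the trivial doubling of Lebesgue measure.

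First I would extract the two separate consequences of admissibility of $B_1$: the bound $r_1\le\alpha m(c_1)$ gives both $r_1\le\alpha$ and $|c_1|r_1\le\alpha$. The intersection hypothesis yields $|c_1-c_2|\le r_1+r_2\le(1+\tau)r_1$, and so for every $x\in B_1\cup B_2$ we have $|x-c_1|\le r_1+|c_1-c_2|+r_2\le(1+2\tau)r_1$.

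The key step is to control $\big||x|^2-|c_1|^2\big|$ uniformly on $B_1\cup B_2$ by expanding
\[
|x|^2=|c_1|^2+2c_1\cdot(x-c_1)+|x-c_1|^2.
\]
Here the two parts of the admissibility condition play complementary roles: the cross term is bounded by $2|c_1|\,|x-c_1|\le 2(1+2\tau)|c_1|r_1\le 2(1+2\tau)\alpha$, while the quadratic term is bounded by $(1+2\tau)^2 r_1^2\le (1+2\tau)^2\alpha^2$. Thus $\big||x|^2-|c_1|^2\big|\le C_{\alpha,\tau}$ on $B_1\cup B_2$, and consequently
\[
e^{-|x|^2/2}\ \asymp_{\alpha,\tau}\ e^{-|c_1|^2/2}\quad\text{for every }x\in B_1\cup B_2.
\]

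Integrating this pointwise comparison against Lebesgue measure yields $\gamma(B_i)\asymp_{\alpha,\tau} e^{-|c_1|^2/2}|B_i|$ for $i=1,2$, where $|\cdot|$ denotes Lebesgue measure. Since $|B_2|/|B_1|=(r_2/r_1)^n\le\tau^n$, combining these estimates gives $\gamma(B_2)\le d\,\gamma(B_1)$ with a constant $d=d_{\alpha,\tau,n}$ of the required form. The only subtle point in the argument is recognising that the admissibility condition is engineered precisely so that both terms in the expansion of $|x|^2-|c_1|^2$ remain bounded — the $r_1\le 1$ half tames the quadratic term and the $r_1\le 1/|c_1|$ half tames the linear term — so no further ingredient beyond elementary algebra is needed.
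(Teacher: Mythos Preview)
Your argument is correct. The paper does not actually supply a proof of this lemma; it attributes the result to Mauceri and Meda \cite{MM} and merely states it. Your proof is precisely the standard one (and essentially the one in \cite{MM}): admissibility forces both $r_1\le\alpha$ and $|c_1|\,r_1\le\alpha$, which together bound $\big||x|^2-|c_1|^2\big|$ uniformly on $B_1\cup B_2$, so the Gaussian density is comparable to $e^{-|c_1|^2/2}$ throughout and the question reduces to the Lebesgue ratio $(r_2/r_1)^n\le\tau^n$. There is nothing to add.
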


In particular this lemma implies that for all $\a>0$ there exists a constant $d'
= d_{\a,n}'$ such that for all $B(x,r)\in \mathscr{B}_\a$ we have
$$  \g(B(x,2r))\le d' \g(B(x,r)).$$

\begin{lemma}
\label{lem:mnp1}
Let $a,b>0$ be given.
\begin{enumerate}
\item[\rm(i)] If $r\le am(x)$ and $|x-y|<br$, then $r\le a(1+ab)m(y)$.
\item[\rm(ii)] If $|x-y|<bm(x)$, then $m(x)\le (1+b)m(y)$ and $m(y)\le (2+2b)m(x)$. 
\end{enumerate}
\end{lemma}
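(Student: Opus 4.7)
The plan is to reduce (i) to (ii) and then prove (ii) by a short case analysis based on the two elementary facts $m(x) \le 1$ and $|x|m(x) \le 1$, both immediate from $m(x)=\min\{1,1/|x|\}$. It is also convenient to rewrite $m(y) = 1/\max\{1,|y|\}$.

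First I would dispose of (i). Given the hypotheses $r \le am(x)$ and $|x-y| < br$, the triangle inequality for the $m$-weight can be fed only the looser bound $|x-y| < abm(x)$. Then, applying the first half of (ii) with $ab$ in place of $b$, I obtain $m(x) \le (1+ab)m(y)$, and multiplying by $a$ yields $r \le am(x) \le a(1+ab)m(y)$, as required. So it remains to prove (ii).

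For the first inequality of (ii), $m(x) \le (1+b)m(y)$, I would rewrite it as $m(x)\max\{1,|y|\} \le 1+b$ and verify the two cases separately: $m(x) \le 1 \le 1+b$ is trivial, while for the other case the triangle inequality gives
\[
m(x)|y| \le m(x)|x| + m(x)|x-y| < m(x)|x| + bm(x)^2 \le 1 + b,
\]
using $|x|m(x)\le 1$ and $m(x)\le 1$.

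For the second inequality $m(y) \le (2+2b)m(x)$, rewritten as $\max\{1,|x|\} \le (2+2b)\max\{1,|y|\}$, the bound is trivial when $|x|\le 1$. When $|x|>1$ one has $m(x)=1/|x|$ and hence $|x-y|<b/|x|$, so $|y|\ge |x|-b/|x|$. I would now split on the size of $|x|^2$ relative to $b$: if $|x|^2\ge 2b$, then $|y|\ge |x|/2$, and therefore $\max\{1,|x|\}/\max\{1,|y|\} \le |x|/|y| \le 2 \le 2+2b$; if $|x|^2<2b$, then $|x|<\sqrt{2b}\le 2+2b$, and since $\max\{1,|y|\}\ge 1$, again $\max\{1,|x|\}/\max\{1,|y|\}\le|x|\le 2+2b$.

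The only genuinely non-trivial point is the asymmetric constant $2+2b$ in the second inequality of (ii): the reason $m(y)$ cannot be controlled by $(1+b)m(x)$ is that $y$ could be much closer to the origin than $x$ when $b$ is large compared to $|x|$, which is exactly the obstruction handled by the threshold $|x|^2 \gtrless 2b$ in the case analysis above.
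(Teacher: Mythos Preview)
Your proof is correct and follows essentially the same elementary case-analysis as the paper's own proof. The only organisational difference is that the paper proves (i) directly and then obtains the first inequality in (ii) by specialising (i) to $a=1$, $r=m(x)$, whereas you reverse the dependency, proving the first half of (ii) directly and then deducing (i) by applying it with $b$ replaced by $ab$. For the second inequality in (ii) your threshold $|x|^2\gtrless 2b$ plays the same role as the paper's $|x|\gtrless 2b$; both choices work.
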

\begin{proof}
(i): If $|y|\le 1$, then $m(y)= 1$ and $r\le am(x)\le a = am(y)$.

If $1 < |y| \leq 1+ab$, then $m(y) \geq 1/(1+ ab)$ and
$$r \le a \le a(1+a b) m(y).$$

If $|y|>1 + ab$, then $m(y) = \frac1{|y|}$ and
$$r\leq \frac{a}{|x|}
   \leq \frac{a}{|y|-br}
   \leq \frac{a}{|y|-ab}
   \leq \frac{a(1+ab)}{|y|}
      = a(1+ab) m(y).$$

(ii): Put $r'=m(x)$. Then $|x-y|< br'$ and therefore (i) (with $a=1$) implies that 
$r'\le (1+b)m(y)$. This gives the first estimate. To obtain the second 
we consider three cases. If $|x| \leq 1$, then $(2+2b)m(x) \geq 1 \geq m(y)$. 
If $|x| \geq 1$ and
$|x| \leq 2b$, then $(2+2b)m(x) \geq \frac{2+2b}{2b}\geq 1 \geq m(y)$. If $|x| \geq 1$ 
and
$|x| \geq 2b$, then $|y| \geq |x|-\frac{b}{|x|} \geq |x|-\frac{1}{2} \geq \frac{|x|}{2}$, and thus
$m(y) \leq 2m(x) \leq (2+2b)m(x)$.
\end{proof}

For $m\in \Z$ let $\Delta_m$ be the set of dyadic cubes at scale
$m$, i.e.,
$$ \Delta_m = \{2^{-m}(x+[0,1)^n) :  \ x\in \Z^n\}.$$
In the Gaussian setting the idea is to use, at every
scale, cubes whose diameter depends upon another
parameter $l \geq 0$, which keeps track of the distance from the cube
to the origin. More precisely, define the {\em layers}
$$ L_0 = [-1,1)^n, \quad L_{l} = [-2^l, 2^l)^n\setminus [-2^{l-1}, 2^{l-1})^n \
\ (l\ge 1), $$
and define, for $k\in\Z$ and $l\ge 0$,
$$ \Delta_{k,l}^\gamma =\{ Q\in \Delta_{l+k}: \ Q\subseteq L_l\},
\quad \Delta_{k}^\gamma =\bigcup_{l\ge 0}\Delta_{k,l}^\gamma,
\quad \Delta^\gamma = \bigcup_{k\ge 0} \Delta_k^\gamma.$$ Note
that $\Delta_{0,0}^\gamma$ consists of $2^n$ cubes of side length $1,$ 
and that $\Delta_{k,l}^\g = \emptyset$ for all other $k\le -2l$.
Also, if $Q\in \Delta_{k,l}^\gamma$, then $Q$ has side-length $2^{-k-l}$,
diameter $2^{-k-l}\sqrt{n}$, and its centre $x$ has norm $|x| \ge 2^{l-1}$.

Fix an integer $\kappa \geq 1.$ For each $l\ge \lceil
\frac{\kappa+1}{2}\rceil,$ the layer $L_l$ is a disjoint union of cubes in
$\Delta_{-\kappa,l}^\g$, each of which is the disjoint union of
$2^{\kappa n}$ cubes from $\Delta_{0,l}^\g$. Each such cube can be labelled by
a label $i = (i_1,\dots,i_n)\in\{1,\dots, 2^\kappa\}^n$. See Figure 1,
where
$n=2,$ $\kappa = 3,$ and the shaded cubes are the cubes from $\Delta_{0,l}^\g$ with
label $i= (5,8)$
for $l=2,3$.

\begin{figure}\label{fig:1}
\centering
\scalebox{.65}{\includegraphics{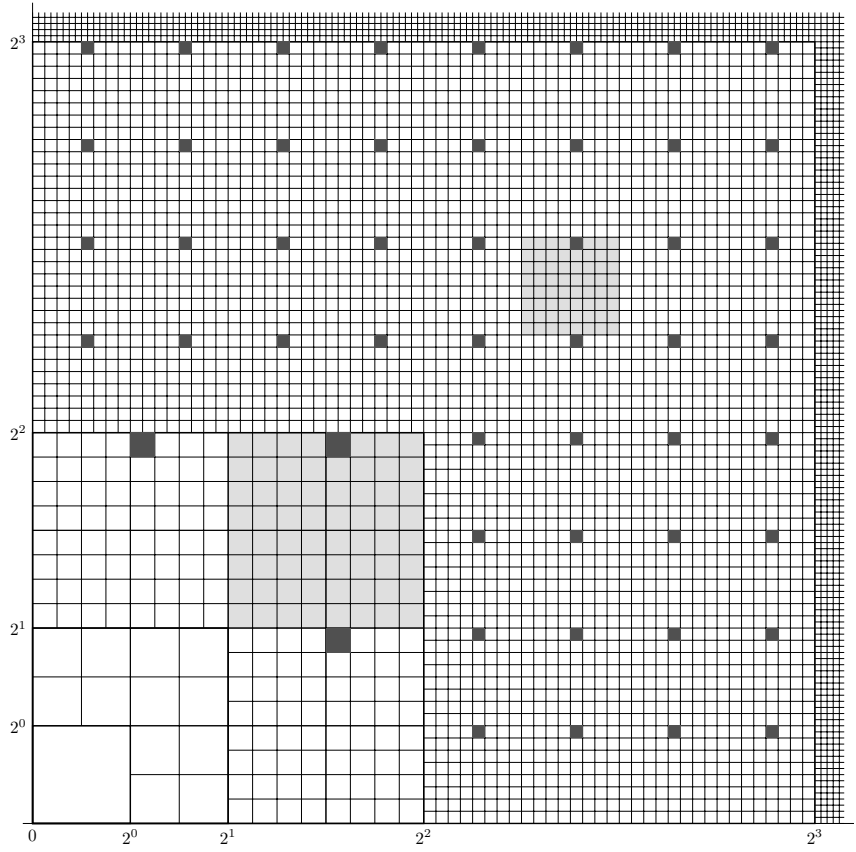}}
\caption{Subdivision of the layers $L_0, L_1, L_2,\dots$ into cubes of
$\Delta_{0,l}^\g$. The (5,8)-cubes in the layers $L_2$ and $L_3$ corresponding to the choice $\kappa=3$ are dark grey. A cube from $\Delta_{-3,2}^\g$ and a cube from $\Delta_{-3,3}^\g$ have been coloured light grey.} 
\end{figure}

For a set $A\subseteq \R^n$ we write
\begin{equation}\label{eq:AplusB}
A+\calC_\a = \{z\in\R^n: \ z \hbox{ is the centre of a ball $B\in \B_\a$ that
intersects $A$}\}.
\end{equation}

\begin{lemma}\label{lem:llmin1} 
Let $p\ge 0$ and $l\ge p+2$ 
be integers, let $Q\in \Delta_{0,l}^\g$ be given, 
and consider a ball $B=B(c_B,r_B)$ in $\B_{2^p}$
intersecting $Q$. Then we have $c_B\in L_{l-1}\cup L_l\cup L_{l+1}$. 
\end{lemma}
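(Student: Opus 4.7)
The plan is to bound $\|c_B\|_\infty$ from above and below by choosing any point $y \in B \cap Q$ and combining two ingredients: the location of $Q$ within the layer $L_l$, and the admissibility bound on $r_B$. Working with the $\ell^\infty$-norm rather than the Euclidean norm is the right move here, because the layers $L_l$ are defined as cubes, so $y \in L_l$ translates cleanly into $2^{l-1} \le \|y\|_\infty < 2^l$.

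First, since $Q \in \Delta_{0,l}^\gamma$ we have $Q \subseteq L_l$, hence any $y \in Q$ satisfies $2^{l-1} \le \|y\|_\infty < 2^l$ (using $l \ge p+2 \ge 2$, so $l \ge 1$). Next, from $B \in \B_{2^p}$ and the trivial bound $m(c_B) \le 1$, I get
$$r_B \;\le\; 2^p m(c_B) \;\le\; 2^p \;\le\; 2^{l-2},$$
the last inequality using the standing hypothesis $l \ge p+2$. Since $y \in B$, this yields $\|c_B - y\|_\infty \le |c_B - y| < r_B \le 2^{l-2}$.

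Combining these via the reverse triangle inequality in $\ell^\infty$ gives
$$2^{l-1} - 2^{l-2} \;\le\; \|c_B\|_\infty \;<\; 2^l + 2^{l-2},$$
i.e.\ $2^{l-2} \le \|c_B\|_\infty < 5 \cdot 2^{l-2} < 2^{l+1}$. Translated back into the layer decomposition, this means $c_B \in [-2^{l+1}, 2^{l+1})^n \setminus [-2^{l-2}, 2^{l-2})^n = L_{l-1} \cup L_l \cup L_{l+1}$, which is the desired conclusion.

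There is no real obstacle: the only things one must notice are (a) that $m(c_B) \le 1$ uniformly, so no case split on $|c_B| \lessgtr 1$ is needed, and (b) that one should use the sup-norm triangle inequality to match the cubic shape of the layers, so that a single factor $2^{l-2}$ suffices to control the displacement from $y$ to $c_B$ in each coordinate.
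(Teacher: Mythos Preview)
Your proof is correct and takes essentially the same approach as the paper: both rest on the key estimate $r_B \le 2^p m(c_B) \le 2^p \le 2^{l-2}$ combined with the geometry of the layers. The paper phrases this as a contradiction argument (showing that the distance between $L_l$ and any $L_{l\pm m}$ with $m\ge 2$ is at least $2^{l-2}$), while you bound $\|c_B\|_\infty$ directly in the sup-norm---a slightly cleaner packaging of the same computation; one tiny caveat is that $L_l=[-2^l,2^l)^n\setminus[-2^{l-1},2^{l-1})^n$ only gives $\|y\|_\infty\le 2^l$ (a coordinate can equal $-2^l$), but the strict inequality $\|c_B-y\|_\infty<2^{l-2}$ still yields the conclusions you need.
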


\begin{proof}
Suppose first that we had $c_B\in L_{l-m}$ for some $2\le m\le l$.
On the one hand, $r_B \le 2^p \le 2^{l-2}$.
On the other hand, the distance between
the layers $L_l$ and $L_{l-m}$ is at least 
$2^{l-2}+2^{l-3}+\dots + 2^{l-m} = 2^{l-1} - 2^{l-m} \ge 2^{l-2}$.
Since $B$ is open, it would follow that $B$ does not intersect 
$Q\subseteq L_l$.

The proof that $c_B$ cannot be in $L_{l+m}$ for any $m\ge 2$ is similar and requires
only cruder estimates.
\end{proof}

\begin{lemma}\label{lem:Whit}
Fix integers $p\ge 1$ and $\kappa\ge p+4$. Let $i
\in\{1,\dots,2^\kappa\}^n$ and
let $Q_1\in \Delta_{0,l_1}^\g$ and $Q_2\in \Delta_{0,l_2}^\g$ be two distinct
cubes with
the same label $i$ in the layers $L_{l_1}$ and $L_{l_2}$ with 
$l_1,l_2\ge \max\{p+2, \lceil\frac{\kappa+1}{2}\rceil\}$. 
Then 
 $$d(Q_1+\calC_{2^p}, Q_2 + \calC_{2^p}) > 0.$$
\end{lemma}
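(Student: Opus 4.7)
The plan is to first bound the radii of the admissible balls centred in $Q_i+\calC_{2^p}$ using Lemma~\ref{lem:llmin1}, then reduce via the triangle inequality to showing the combinatorial separation
$$
d(Q_1,Q_2) > 2^{p+2-l_1}+2^{p+2-l_2},
$$
and finally establish this by case analysis on $|l_1-l_2|$.

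If $c_i\in Q_i+\calC_{2^p}$ is the centre of a ball $B(c_i,r_i)\in\B_{2^p}$ meeting $Q_i$, then Lemma~\ref{lem:llmin1} gives $c_i\in L_{l_i-1}\cup L_{l_i}\cup L_{l_i+1}$, and the hypothesis $l_i\ge p+2\ge 3$ forces $|c_i|\ge 2^{l_i-2}$, so $r_i\le 2^p m(c_i)\le 2^{p+2-l_i}$. Choosing $q_i\in Q_i$ with $|c_i-q_i|<r_i$, the triangle inequality yields $|c_1-c_2|\ge d(Q_1,Q_2)-2^{p+2-l_1}-2^{p+2-l_2}$, which reduces matters to the displayed inequality above.

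I then split on $|l_1-l_2|$. If $l_1=l_2=l$, distinct same-labelled cubes must lie in different big cubes of $\Delta_{-\kappa,l}^\g$; translation between those big cubes shifts $Q_1$ by a nonzero integer multiple of $2^{\kappa-l}$ in some coordinate, so $d(Q_1,Q_2)\ge 2^{\kappa-l}-2^{-l}=2^{-l}(2^\kappa-1)$, which exceeds $2^{p+3-l}$ since $\kappa\ge p+4$. If $|l_1-l_2|\ge 2$, say $l_2\ge l_1+2$, then $\|x\|_\infty<2^{l_1}$ on $Q_1$ while $\|y\|_\infty\ge 2^{l_2-1}\ge 2^{l_1+1}$ on $Q_2$, so by the reverse triangle inequality in sup norm $d(Q_1,Q_2)\ge 2^{l_1}\ge 2^{p+2}$, far exceeding the required bound.

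The delicate case is $l_2=l_1+1$. Dyadic alignment (since $2^{l_i-1}$ is an integer multiple of $2^{-l_i}$) forces each coordinate projection of $Q_i$ to lie either entirely in the ``inner'' interval $(-2^{l_i-1},2^{l_i-1})$ or entirely in an ``outer'' interval $[2^{l_i-1},2^{l_i})$ or $(-2^{l_i},-2^{l_i-1}]$. Since $Q_2\subset L_{l_2}$, some coordinate $j^*$ is outer for $Q_2$; if $Q_1$'s $j^*$-projection is either inner or outer of opposite sign, the gap in coordinate $j^*$ alone is already $\ge 2^{l_1-1}\ge 2^{p+1}$, which suffices. The remaining sub-case --- same coordinate, same sign, say positive --- is the main obstacle: writing
$$
q_{1,j^*}=2^{l_1-1}+k\cdot 2^{\kappa-l_1}+(i_{j^*}-1)2^{-l_1},\quad q_{2,j^*}=2^{l_1}+k'\cdot 2^{\kappa-l_1-1}+(i_{j^*}-1)2^{-l_1-1},
$$
and minimising the coordinate gap $q_{2,j^*}-q_{1,j^*}-2^{-l_1}$ over admissible big-cube indices $k,k'$ gives the minimum $2^{\kappa-l_1}-2^{-l_1-1}(i_{j^*}+1)$, which is worst at $i_{j^*}=2^\kappa$, where it equals $2^{-l_1-1}(2^\kappa-1)$; this just exceeds the threshold $r_1+r_2\le 3\cdot 2^{p+1-l_1}=2^{-l_1-1}\cdot 12\cdot 2^p$ precisely because $\kappa\ge p+4$ yields $2^\kappa\ge 16\cdot 2^p$. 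The main difficulty is disentangling the two different big-cube scales ($2^{\kappa-l_1}$ versus $2^{\kappa-l_1-1}$) and the two different label offsets, and seeing that $\kappa\ge p+4$ is exactly the threshold that makes the worst-case same-direction gap beat the total radius contribution.
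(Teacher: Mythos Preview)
Your proof is correct and follows essentially the same strategy as the paper's: bound the admissible radii via Lemma~\ref{lem:llmin1}, then show the combinatorial distance $d(Q_1,Q_2)$ exceeds the sum of those radii, with the adjacent-layer case being critical and yielding exactly the paper's bound $(2^\kappa-1)/2^{l+1}$. The differences are minor: you use the cruder radius estimate $r_i\le 2^{p+2-l_i}$ (from $|c_i|\ge 2^{l_i-2}$) rather than the paper's sharper $r_{B}\le 2^p/(2^{l-1}-2^{p-l+2})$, and you spell out the same-layer and far-apart cases explicitly where the paper simply asserts they ``can be handled with cruder estimates''; conversely, the paper states the key separation $d(Q_1,Q_2)\ge (2^\kappa-1)/2^{l+1}$ in one line whereas you derive it by an explicit coordinate computation.
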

\begin{proof} 
We consider the case when one of the cubes, say $Q_1,$ lies in layer $l$ and the
other, say $Q_2,$ lies in layer $l+1$; the case where
both cubes lie in the same layer or are more than one layer apart can be handled
with cruder estimates.

The centre of a ball $B = B(c_B,r_B)$ in $\B_{2^p}$ intersecting a layer
$L_{l}$ satisfies 
$|c_B|\ge 2^{l-1}-r_B\ge 2^{l-1}-2^p|c_B|^{-1}$, which in view of Lemma
\ref{lem:llmin1} implies that
$|c_B|\ge 
2^{l-1}-  2^{p-l+2}$. Therefore 
$r_B \le 2^p/(2^{l-1}- 2^{p-l+2}).$ For $j\in\{1,2\}$ let $B_j = B(c_{B_j},
r_{B_j})$ be a ball in $\B_{2^p}$ intersecting $Q_j.$
It follows that 
$$
 r_{B_1} \le \frac{1}{2^{l-p-1}-2^{-l+2}}, \quad
 r_{B_2} \le \frac{1}{2^{l-p} - 2^{-l+1}}.$$
The cubes $Q_1$ and $Q_2$ are separated by at least $2^{\kappa}-1$ cubes in 
$\Delta_{0,l}^\gamma$ or $\Delta_{0,l+1}^\gamma$, so the distance between $Q_1$
and $Q_2$ is at least
$(2^{\kappa}-1)/2^{l+1}$. Hence, using that $l\ge p+2\ge 3$, 
\begin{align*}
d(Q_1+\calC_{2^p},Q_{2}+\calC_{2^p}) 
 & \ge \frac{2^{\kappa}-1}{2^{l+1}} 
 	- \Big( \frac{1}{2^{l-p-1}-2^{-l+2}} 
	+\frac{1}{2^{l-p} - 2^{-l+1}}\Big)
\\& \ge \frac{2^{\kappa}-1}{2^{l+1}} 
 	- \Big( \frac{1}{2^{l-p-1}-2^{-p}} 
	+\frac{1}{2^{l-p} - 2^{-p-1}}\Big)
\\& = \frac{2^{\kappa}-1}{2^{l+1}} 
 	- 2^p \Big( \frac{2}{2^{l}-2} 
	+\frac{1}{2^{l} - \frac12}\Big)
\\& \geq \frac{2^{\kappa}-1}{2^{l+1}} 
 	- 2^p \frac{3}{2^{l} - 2}
\\& \geq \frac{2^{\kappa}-1}{2^{l+1}} 
 	- 2^p \frac{8}{2^{l+1}} 
\\& = \frac{2^{\kappa}- 2^{p+3}-1}{2^{l+1}},
\end{align*}
and the right-hand side is strictly positive since $\kappa\ge p+4$.
 \end{proof}

In the remainder of this section we fix integers $p\ge 2$ 
and take $\kappa= p+4$.
Note that all $l\ge p+2$ 
then satisfy the assumptions of Lemma \ref{lem:Whit}. 

\begin{definition}\label{def:i-cube}
The set $A_{p}^{(i)}$ is the the union of all cubes in $\bigcup_{l\ge p+2
}\Delta_{0,l}^\g$ with label $i \in\{1,\dots,2^{p+4}\}^n$.
\end{definition}

\begin{definition}\label{def:adm-Whit}
Let $\lambda>0$. A set $A\subseteq \R^n$ is said to be a {\em $\lambda$-admissible
Whitney set} if for all $x\in A$
we have  
$$ d(x,\complement A)\le \lambda m(x).$$
\end{definition}

Clearly, subsets of admissible Whitney sets are admissible Whitney.

\begin{theorem}\label{thm:A-Whitney} Let $p\ge 2$. 
\begin{enumerate}
\item[\rm(i)] $Q+\calC_{2^p}$ with $Q\in \Delta_{0,l}^\g$ and $l=0,\ldots,p+1$
is $2^{2p+2}\sqrt{n}$-admissible Whitney; 
\item[\rm(ii)] $A_{p}^{(i)}+\calC_{2^p}$ with 
$i\in\{1,\dots,2^{p+4}\}^n$ is $2^{p+3}\sqrt{n}$-admissible Whitney.
\end{enumerate}
\end{theorem}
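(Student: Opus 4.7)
The plan begins by reformulating \eqref{eq:AplusB} as
$$S+\calC_{2^p} \;=\; \{z \in \R^n : d(z,S) < 2^p m(z)\},\qquad S\subseteq\R^n,$$
which is a bounded ``$m$-weighted'' neighbourhood of $S$ (bounded because $2^p m(z)\le 2^p$). On such a set $m$ is bounded below, and the admissibility-Whitney bound then reduces to separately controlling $d(x,\complement A)$ from above and $1/m(x)$ from above.

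For (i), fix $Q \in \Delta_{0,l}^\g$ with $l \le p+1$ and set $A := Q+\calC_{2^p}$. From $Q \subseteq [-2^l,2^l)^n$ and $d(z,Q) < 2^p$ for all $z \in A$, one gets $|z| \le 2^l\sqrt{n} + 2^p \le 2^{p+2}\sqrt{n}$, hence $m(x) \ge 2^{-p-2}/\sqrt{n}$ on $A$. To bound $d(x,\complement A)$, pick $y_0\in Q$ with $|x-y_0|=d(x,Q)$ and walk outward along the ray through $x$: by convexity of $Q$, the point $y = y_0 + 2^p(x-y_0)/|x-y_0|$ has $y_0$ as its own projection onto $Q$, so $d(y,Q) = 2^p \ge 2^p m(y)$ and thus $y \in \complement A$ with $|y-x| = 2^p - d(x,Q) \le 2^p$ (plus at most $\diam Q/2$ in the degenerate case $x \in \overline{Q}$, handled by first walking to $\partial Q$). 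Combining these bounds and tracking constants yields $2^{2p+2}\sqrt{n}$.

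For (ii), write $A := A_p^{(i)}+\calC_{2^p} = \bigcup_{Q \in A_p^{(i)}}(Q+\calC_{2^p})$. The key reduction: since $\kappa = p+4$ and every $Q \in A_p^{(i)}$ lies in a layer $L_l$ with $l \ge p+2$, Lemma \ref{lem:Whit} gives pairwise positive separations between these constituents. Hence for $x \in Q+\calC_{2^p}$, the nearest complement point of $Q+\calC_{2^p}$ lies on its boundary, and therefore at zero distance from $Q+\calC_{2^p}$, so it cannot lie in any other $Q'+\calC_{2^p}$; this yields $d(x,\complement A) \le d(x,\complement(Q+\calC_{2^p}))$. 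For the latter, Lemma \ref{lem:llmin1} now forces $|x| \ge 2^{l-2}$ (and $|x| \le 2^{l+1}\sqrt{n}$), so $m(x) \le 2^{2-l}$ and $d(x,Q) < 2^{p+2-l}$, giving $\diam(Q+\calC_{2^p}) \le \diam Q + 2^{p+3-l}$. Careful bookkeeping then produces $d(x,\complement A)/m(x) \le 2^{p+3}\sqrt{n}$. The main obstacle is the reduction step in (ii)---identifying that the nearest complement point of a single constituent already lies in $\complement A$---which is precisely what Lemma \ref{lem:Whit}'s positive separation supplies.
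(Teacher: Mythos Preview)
Your approach is essentially the same as the paper's: reformulate $Q+\calC_{2^p}$ as the set $\{z:d(z,Q)<2^p m(z)\}$, bound $m$ from below using the layer structure, bound $d(x,\complement(Q+\calC_{2^p}))$ from above via the ``thickness'' of this neighbourhood, and for (ii) use Lemma~\ref{lem:Whit} to reduce from $A_p^{(i)}+\calC_{2^p}$ to a single $Q+\calC_{2^p}$. Your reduction argument in (ii) --- that the nearest point of $\complement(Q+\calC_{2^p})$ to $x$ lies on $\partial(Q+\calC_{2^p})$ and hence, by positive separation, also in $\complement A$ --- is exactly what the paper's ``in view of Lemma~\ref{lem:Whit}'' is encoding.

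There is, however, a quantitative gap: the bounds you write down do \emph{not} deliver the constants $2^{2p+2}\sqrt{n}$ and $2^{p+3}\sqrt{n}$. In (i), walking from $x\in Q$ to $\partial Q$ costs at most half the \emph{side length} $2^{-l-1}\le\tfrac12$, not $\diam Q/2=2^{-l-1}\sqrt{n}$; with your $\diam Q/2$ the ratio picks up a term $2^p n$ which exceeds $2^{2p+2}\sqrt{n}$ once $\sqrt{n}>2^{p+2}$. In (ii), bounding $d(x,\complement(Q+\calC_{2^p}))$ by the full diameter $\diam Q+2^{p+3-l}$ (rather than by $2^{p+2-l}$ for $x\notin Q$, and $2^{-l-1}+2^{p+2-l}$ for $x\in Q$) yields, with $m(x)\ge 2^{-l-1}/\sqrt{n}$, a ratio of $2n+2^{p+4}\sqrt{n}$, which is strictly larger than $2^{p+3}\sqrt{n}$. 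The fix is exactly what the paper does: use the containment $Q+\calC_{2^p}\subseteq\{z:d(z,Q)\le 2^{p-l+2}\}$ and estimate the distance to the complement of this enlarged set by $2^{p-l+2}$ (plus half the side length when $x\in Q$), combined with the sharper lower bound $m(x)\ge 2^{-l}/\sqrt{n}$ for $x\in Q$. With these refinements your ``careful bookkeeping'' goes through.
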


\begin{proof}
In view of Lemma \ref{lem:Whit}
both assertions follow from the 
fact (proved next) that $Q+\calC_{2^p}$ is admissible Whitney for any cube
$Q\in\Delta_{0,l}^\g$, with constant
$2^{2p+2}\sqrt{n}$ for $l=0,\ldots,p+1$ and constant 
$2^{p+3}\sqrt{n}$ for $l\ge p+2$.

First let $l\in\{0,\ldots,p+1\}$.
Let $Q\in \Delta_{0,l}^\g$, consider a ball $B=B(c_B,r_B) \in
\B_{2^p}$ intersecting $Q$, and remark that 
$r_B \le 2^p$. It follows that $$ Q+\calC_{2^p}\subseteq
\{z\in\R^n: \ d(z,Q) \leq 2^p\}.$$
Let $z\in Q+\calC_{2^p}$ be given.
If $z\in Q$, then the distance of $z$ to the complement of $ Q+\calC_{2^p}$ is
at most $\frac{1}{2}+2^p.$ 
At the same time, $m(z)\ge \frac{1}{2^{p+1}\sqrt{n}}$ (since $z\in L_0\cup
\cdots\cup L_{p+1}$).
If $z\not\in Q$, then the distance of $z$ to the complement of $ Q+\calC_{2^p}$
is at most $2^{p}$. 
At the same time, $m(z)\ge \frac{1}{2^{p+2}\sqrt{n}}$ (since $z\in L_0\cup
\cdots\cup L_{p+2}$). 
In both cases, the inequality in Definition \ref{def:adm-Whit} is satisfied.

Next let $l\ge p+2$. Let $Q\in \Delta_{0,l}^\g$ be given and consider a ball $B=B(c_B,r_B)$ in $\B_{2^p}$ intersecting $Q$. 
Using Lemma \ref{lem:llmin1} we find that
$r_B\le
2^{p} |c_B|^{-1}\le  2^{p-l+2}$. It follows that $$ Q+\calC_{2^{p}}\subseteq
\{z\in\R^n: \ d(z,Q) \leq  2^{p-l+2}\}.$$
Now let $z\in Q+\calC_{2^{p}}$ be given.
If $z\in Q$, then the distance of $z$ to the complement of $ Q+\calC_{2^{p}}$ is
at
most $2^{-l-1}  
+ 2^{p-l+2}$. At the same time, $m(z)\ge \frac{1}{2^{l}\sqrt{n}}$ (since $z\in
L_l$).
If $z\not\in Q$, then the distance of $z$ to the complement of $
Q+\calC_{2^{p}}$ is
at most $2^{p-l+2}$. At the same time, $m(z)\ge \frac{1}{2^{l+1}\sqrt{n}}
$ (since $z\in L_{l-1}\cup L_l\cup L_{l+1}$). 
In each of these cases, the inequality in Definition \ref{def:adm-Whit} is
satisfied.
\end{proof}

\begin{corollary}
\label{thm:Whitney}
There exists a constant $N$, depending only on $p\ge 2$ and the dimension
$n$,
such that every open set in $\R^n$ can be covered by 
$N$ open $2^{2p+2}\sqrt{n}$-admissible Whitney sets.
\end{corollary}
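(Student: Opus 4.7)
The plan is to extract a single universal finite cover of $\R^n$ from Theorem~\ref{thm:A-Whitney} and then intersect it with the given open set, exploiting the fact noted after Definition~\ref{def:adm-Whit} that subsets of admissible Whitney sets remain admissible Whitney. Consider the finite family
$$\mathscr{U} := \{Q+\calC_{2^p} : Q\in \Delta_{0,l}^\g,\ 0\le l \le p+1\} \cup \{A_p^{(i)}+\calC_{2^p} : i\in\{1,\dots,2^{p+4}\}^n\}.$$
Each layer $L_l$ with $l\le p+1$ contains only finitely many cubes of $\Delta_l$ and the set of labels is finite, so $|\mathscr{U}|$ depends only on $p$ and $n$. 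I would first check $\bigcup \mathscr{U} = \R^n$: any $x\in \R^n$ lies in a unique layer $L_l$ and in a unique cube $Q\in \Delta_{0,l}^\g$; if $l\le p+1$ then $x\in Q\subseteq Q+\calC_{2^p}$, while if $l\ge p+2$ then $Q\subseteq A_p^{(i)}$ for the label $i$ of $Q$, so $x\in A_p^{(i)}+\calC_{2^p}$.

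Next I would verify that each member of $\mathscr{U}$ is open. Unpacking~\eqref{eq:AplusB} gives
$$A+\calC_{2^p} = \{z\in\R^n : d(z,A) < 2^p m(z)\},$$
which is the preimage of $(-\infty,0)$ under the continuous map $z\mapsto d(z,A)-2^p m(z)$, and hence is open.

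Given an open set $O\subseteq\R^n$, form $\mathscr{U}_O := \{O\cap U : U\in\mathscr{U}\}$. Every element is open (intersection of two open sets) and is contained in some $U\in\mathscr{U}$ which, by Theorem~\ref{thm:A-Whitney}, is $2^{2p+2}\sqrt{n}$-admissible Whitney; for the sets of type (ii) note that $2^{p+3}\le 2^{2p+2}$ when $p\ge 1$, so their $2^{p+3}\sqrt{n}$-admissibility upgrades to $2^{2p+2}\sqrt{n}$-admissibility. Since subsets of admissible Whitney sets remain admissible Whitney, every element of $\mathscr{U}_O$ is $2^{2p+2}\sqrt{n}$-admissible Whitney, and $\bigcup\mathscr{U}_O = O\cap\R^n = O$. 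Setting $N:=|\mathscr{U}|$ completes the proof.

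Essentially all of the technical content lives in Theorem~\ref{thm:A-Whitney}; the only new observation here is the brief continuity argument showing that the sets $A+\calC_{2^p}$ are open. The rest is combinatorial bookkeeping and the trivial hereditary property of the admissible Whitney condition under passage to subsets.
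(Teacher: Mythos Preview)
Your proposal is correct and follows essentially the same approach as the paper, which simply remarks that the corollary is obtained by counting the sets produced in Theorem~\ref{thm:A-Whitney}. Your additional verification that each $A+\calC_{2^p}$ is open via the identity $A+\calC_{2^p}=\{z:d(z,A)<2^p m(z)\}$ is a useful detail the paper does not make explicit.
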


An explicit bound on $N$ is obtained by 
counting the number of sets involved in
Theorem \ref{thm:A-Whitney}, which can be estimated by $2^n(1+
2^{(p+4) n} + \ldots + 2^{(p+1) (p+4) n}) + 2^{(p+4) n}$.

The next result is an immediate consequence of its euclidean counterpart
(see \cite[VI.1]{littleStein} for the details). The cubes that we pick up from
the euclidean proof will automatically be admissible at a suitable scale (which
depends upon $n$ only) because we start from an admissible Whitney set.

\begin{lemma}\label{lem:Wpart}
Let $\lambda>0$ and suppose $O\subseteq \R^n$ is an open $\lambda$-admissible Whitney set.
There exists a constant $\rho$, depending only on $\lambda$ and the
dimension
$n$, a countable family of disjoint cubes $(Q_{m})_m$ in $\Delta^{\gamma}$, and
a
family of functions $(\phi_{m})_m \subseteq C_{\rm c}^\infty(\R^{n})$
such that:
\begin{enumerate}
\item[\rm(i)]
$\bigcup_{m} Q_{m} = O$;
\item[\rm(ii)]
for all $m$ we have $\diam(Q_{m}) \leq d(Q_{m},\complement O) \leq
\rho\diam(Q_{m})$;
\item[\rm(iii)]
for all $m$ we have $\supp(\phi_{m}) \subseteq Q_{m}^{*}$, where $Q_{m} ^{*}$
denotes the cube with the same center as $Q_{m}$ but side
length multiplied by $\rho$;
\item[\rm(iv)]
for all $m$ and all $x \in Q_{m}$ we have $\frac1\rho \leq
\phi_{m}(x) \leq 1$;
\item[\rm(v)]
for all $x \in O$ we have $\sum  _{m} \phi_{m}(x) =
1.$
\end{enumerate}
\end{lemma}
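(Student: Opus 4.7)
The plan is to apply the classical Euclidean Whitney decomposition from \cite[VI.1]{littleStein} to $O$, subdivide the resulting standard dyadic cubes by a bounded amount depending on $\lambda$ and $n$ so that each sub-cube automatically lies in $\Delta^\gamma$, and then equip the refined family with the standard smooth partition of unity.

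First I would apply Stein's classical Whitney lemma to obtain a countable family of pairwise disjoint Euclidean dyadic cubes $(\widetilde Q_m)_m$ whose union is $O$ and which satisfy $\diam(\widetilde Q_m)\le d(\widetilde Q_m,\complement O)\le 4\diam(\widetilde Q_m)$. For any $x\in \widetilde Q_m$, combining this with the $\lambda$-admissibility of $O$ gives
$$\diam(\widetilde Q_m)\le d(\widetilde Q_m,\complement O)\le d(x,\complement O)\le \lambda m(x),$$
so each $\widetilde Q_m$ has side length at most $\lambda m(x)/\sqrt{n}$. I would then fix an integer $K=K(\lambda,n)\ge 0$ satisfying $2^K\ge 2\lambda/\sqrt{n}$ and dyadically subdivide every $\widetilde Q_m$ into its $2^{nK}$ congruent sub-cubes, relabelling the resulting family as $(Q_m)_m$.

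The heart of the argument is the verification that each $Q_m$ belongs to $\Delta^\gamma$. Pick $x\in Q_m$ and let $l\ge 0$ be the unique integer with $x\in L_l$. A trivial computation gives $m(x)\le 2\cdot 2^{-l}$ uniformly in $l\ge 0$, so the new side length $s=2^{-K}\,\mathrm{side}(\widetilde Q_m)$ satisfies $s\le \lambda m(x)/(2^K\sqrt{n})\le 2^{-l}$, and in particular $s\le 1$. Since the layer boundaries are placed at the integers $\pm 2^{l'}$ ($l'\ge 0$), which are integer multiples of $s$, the dyadic cube $Q_m$ cannot straddle them; hence $Q_m\subseteq L_l$, and together with $s\le 2^{-l}$ this places $Q_m$ in $\Delta_{k,l}^\gamma$ for some $k\ge 0$, i.e., in $\Delta^\gamma$. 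The Whitney bound (ii) is preserved with constant $\rho=5\cdot 2^K$, because $\diam(Q_m)=2^{-K}\diam(\widetilde Q_m)$, $d(Q_m,\complement O)\ge d(\widetilde Q_m,\complement O)\ge \diam(\widetilde Q_m)$, and $d(Q_m,\complement O)\le d(\widetilde Q_m,\complement O)+\diam(\widetilde Q_m)\le 5\diam(\widetilde Q_m)$.

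For (iii)--(v) I would use the classical recipe: choose smooth bumps $\widetilde\phi_m\in C_{\rm c}^\infty(\R^n)$ supported in the dilated cubes $Q_m^\ast$ with $\widetilde\phi_m\equiv 1$ on $Q_m$, invoke the bounded-overlap property of $(Q_m^\ast)_m$ (a standard consequence of the Whitney geometry (ii)) to see that $1\le \sum_k \widetilde\phi_k\le N$ on $O$, and set $\phi_m:=\widetilde\phi_m/\sum_k \widetilde\phi_k$. The only Gaussian-specific obstacle is the step above: one must show that a controlled dyadic refinement of each Whitney cube lies in a single layer $L_l$ with side length at most $2^{-l}$, and this is where both the $\lambda$-admissibility hypothesis and the fact that the layer boundaries sit at dyadic integers are essential.
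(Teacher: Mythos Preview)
Your approach is essentially the same as the paper's: apply the classical Euclidean Whitney lemma from \cite[VI.1]{littleStein} and then observe that the $\lambda$-admissibility hypothesis forces the resulting cubes (after a bounded subdivision) into $\Delta^\gamma$. The paper gives only a one-sentence sketch (``the cubes that we pick up from the euclidean proof will automatically be admissible at a suitable scale \dots\ because we start from an admissible Whitney set''), whereas you supply the actual verification --- in particular the subdivision by $2^K$ with $2^K\ge 2\lambda/\sqrt{n}$ and the observation that dyadic cubes of side $\le 1$ cannot straddle the integer hyperplanes bounding the layers $L_l$ --- which is genuinely needed for the $\lambda$ arising in the application ($\lambda=2^{10}\sqrt{n}$). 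One minor point worth making explicit: after subdivision the dilates $Q_m^{*}$ overlap more than the original Whitney dilates (each $\widetilde Q_m$ spawns $2^{nK}$ children whose $\rho$-dilates all cover $\widetilde Q_m$), so the overlap constant $N$ in your partition-of-unity step is of order $2^{nK}$ times the classical one; you should therefore take $\rho$ to be the maximum of $5\cdot 2^K$ and this overlap bound, which is still a constant depending only on $\lambda$ and $n$.
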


\section{Gaussian tent spaces}

Throughout this section we fix $1 < q < \infty$ and let $q' := \frac{q}{q-1}$
denote its conjugate exponent.
Let
$$D := \{(x,t)\in \R^n\times (0,\infty): \ t < m(x)\}.$$  
Note that a point $(x,t)\in \R^d\times (0,\infty)$ belongs to 
$\overline D$ if and only if $B(x,t)\in \B_1$.
 
\begin{definition} 
The {\em Gaussian tent space} $T^{1,q}(\gamma)$ is the completion of $C_{\rm
c}(D)$ with respect to the norm
$$
\|f\|_{T^{1,q}(\gamma)} := \|Jf\|_{L^{1}(\R^n, d\gamma(x);L^{q}(D,
d\gamma(y)\frac{dt}{t}))},
$$
where 
$$
(J f(x))(y,t) := \frac{\one_{B(y,t)}(x)}{{\gamma(B(y,t))}^{\frac{1}{q}}}f(y,t). $$
\end{definition}

For a measurable set $A\subseteq \R^n$ and a real number $\a>0$ 
we define the {\em tent with aperture $\a$ 
over $A$} by
$$ T_\a(A)= \{(y,t) \in \R^n\times (0,\infty): \  d(y,\complement A) \geq \a
t\}.$$

\begin{definition}\label{def:atom} Let $\a$ be a positive real number.
A function $a : D\to \C$ is called a
$T^{1,q}(\gamma)$ {\em $\a$-atom} if there exists a ball
$B \in \B_{\a}$ such that
\begin{enumerate}
\item[\rm(i)]
$a$ is supported in $T_1(B)\cap D$; 
\item[\rm(ii)]
$\|a\|_{L^{q}(D,\frac{d\gamma dt}{t})} \le
{{\gamma(B)}^{-1/q'}}$.
\end{enumerate}
\end{definition}

\begin{lemma}
If $a$ is a $T^{1,q}(\gamma)$ $\a$-atom, then $a \in
T^{1,q}(\gamma)$ and $\n a\n_{T^{1,q}(\gamma)}\leq 1.$
\end{lemma}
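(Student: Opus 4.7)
The plan is to estimate $\|a\|_{T^{1,q}(\gamma)}$ directly by exploiting the support property of an atom together with Hölder's inequality, in the spirit of the classical Coifman--Meyer--Stein argument but adapted to the Gaussian measure.

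First I would observe that if $(y,t) \in T_1(B)$, then $d(y,\complement B) \ge t$, which forces the open ball $B(y,t)$ to be contained in $B$. Consequently the integrand $(Ja(x))(y,t) = \mathbf{1}_{B(y,t)}(x)\gamma(B(y,t))^{-1/q}a(y,t)$ vanishes unless $x \in B(y,t) \subseteq B$. Therefore the outer $L^1(\gamma)$-norm in the definition of $\|a\|_{T^{1,q}(\gamma)}$ is really an integral over $B$:
\begin{equation*}
\|a\|_{T^{1,q}(\gamma)} = \int_B \Bigl(\int_{T_1(B) \cap D} \frac{\mathbf{1}_{B(y,t)}(x)}{\gamma(B(y,t))}|a(y,t)|^q \, d\gamma(y)\frac{dt}{t}\Bigr)^{1/q} d\gamma(x).
\end{equation*}

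Next I would apply Hölder's inequality to the outer $d\gamma(x)$-integral over the set $B$, with exponents $q$ and $q'$, gaining a factor of $\gamma(B)^{1/q'}$ and turning the outer exponent $1/q$ into an overall exponent $1/q$ on a full $q$-th power integral. Then Fubini's theorem allows me to interchange the $x$-integral (which only sees $\mathbf{1}_{B(y,t)}(x)$) with the $(y,t)$-integral. Using once more that $B(y,t) \subseteq B$ when $(y,t) \in T_1(B)$, I get
\begin{equation*}
\int_B \mathbf{1}_{B(y,t)}(x)\, d\gamma(x) = \gamma(B(y,t)),
\end{equation*}
so the factor $\gamma(B(y,t))^{-1}$ cancels exactly with the $d\gamma(x)$-integral.

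After this cancellation the remaining expression is simply $\gamma(B)^{1/q'}\|a\|_{L^q(D,d\gamma\,dt/t)}$, which is bounded by $1$ by the size condition (ii) in Definition \ref{def:atom}. There are no real obstacles here: the only point where the Gaussian aspect matters is the cancellation $\int_B \mathbf{1}_{B(y,t)}\, d\gamma = \gamma(B(y,t))$, which works because the tent condition places the ball $B(y,t)$ inside $B$. Finally, the fact that $a \in T^{1,q}(\gamma)$ (i.e., that $a$ is not merely a formal object but genuinely lies in the completion) follows by approximating $a$ in $L^q(D, d\gamma\,dt/t)$ by elements of $C_c(D)$ supported in $T_1(B) \cap D$ and applying the same estimate to the differences.
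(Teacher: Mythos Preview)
Your argument is correct and essentially identical to the paper's: observe that $(y,t)\in T_1(B)$ forces $B(y,t)\subseteq B$, restrict the outer integral to $B$, apply H\"older in $x$ with exponents $q,q'$, then Fubini to cancel $\gamma(B(y,t))$, and finish with the size condition. The only addition is your closing remark on approximating $a$ by $C_c(D)$-functions to justify membership in the completion, which the paper omits.
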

\begin{proof} Let $a$ be a $T^{1,q}(\gamma)$ $\a$-atom supported in $T_1(B)\cap
D$ for some $B \in \B_\a$.
If $(y,t)\in T_1(B)\cap D$ 
and $x\in B(y,t)$, then $x\in B$. First using this fact, then
H\"older's inequality, then the Fubini theorem, we obtain
\begin{equation*}
\begin{split}
\int  _{\R^{n}}& \Big(\iint _{D}
\frac{\one_{B(y,t)}(x)}{\gamma(B(y,t))}
|a(y,t)|^{q}d\gamma(y)\frac{dt}{t}\Big)^{\frac{1}{q}}d\gamma(x) \\
&= \int  _{\R^{n}} \Big(\iint_{D}
\frac{\one_{B(y,t)}(x)}{\gamma(B(y,t))}
 |a(y,t)|^{q}d\gamma(y)\frac{dt}{t}\Big)^{\frac{1}{q}} \one_B(x) d\gamma(x) \\
& \leq   \Big(\int_{\R^n}\iint_{D}
\frac{\one_{B(y,t)}(x)}{\gamma(B(y,t))}
|a(y,t)|^{q}d\gamma(y)\frac{dt}{t}\,d\gamma(x)\Big)^\frac1q
{\gamma(B)}^{\frac{1}{q'}}
\\ & =  \Big(\iint_{D}
|a(y,t)|^{q}d\gamma(y)\frac{dt}{t}\Big)^\frac1q{\gamma(B)}^{\frac{1}{q'}}
\\ & \leq 1.
\end{split}
\end{equation*}
\end{proof}

The set $D$ admits a locally finite cover with tents $T_1(B)$ based at balls 
$B\in\B_\a$ 
if and only if $\a>1$; this explains the condition $\a>1$ in the next theorem, 
which establishes an atomic decomposition of $T^{1,q}(\gamma)$.
The proof follows the lines of the euclidean counterpart in \cite{CMS} (see also
the expanded version in the
setting of spaces of homogeneous type \cite{R}). However, one
needs to be careful not to use a doubling property for
non-admissible balls; it is here where the results of the previous section
come to rescue.

\begin{theorem}[Atomic decomposition]\label{thm:atomic}
For all $f \in T^{1,q}(\gamma)$ and $\a>1$, there exist a sequence 
$(\lambda_{n})_{n\ge 1}
\in \ell_{1}$ and a sequence of $T^{1,q}(\gamma)$ $\a$-atoms $(a_{n})_{n\ge 1}$
such that
\begin{enumerate}
\item[\rm(i)] $f = \sum  _{n\ge 1}\lambda_{n}a_{n}$;
\item[\rm(ii)]
$\sum  _{n\ge 1}|\lambda_{n}| \lesssim
\|f\|_{T^{1,q}(\gamma)}$.
\end{enumerate}
\end{theorem}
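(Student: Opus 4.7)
The plan is to adapt the classical Coifman--Meyer--Stein argument, using Corollary~\ref{thm:Whitney} and Lemma~\ref{lem:Wpart} in place of the usual Euclidean Whitney decomposition. Given $f\in T^{1,q}(\gamma)$, introduce the Gaussian area function
$$
A(f)(x) := \Bigl(\iint_D \frac{\one_{B(y,t)}(x)}{\gamma(B(y,t))}\,|f(y,t)|^q\,d\gamma(y)\tfrac{dt}{t}\Bigr)^{1/q},
$$
so that $\|f\|_{T^{1,q}(\gamma)} = \|A(f)\|_{L^1(\R^n,d\gamma)}$. For $k\in\Z$ the open level set $O_k := \{x\in\R^n: A(f)(x) > 2^k\}$ satisfies the layer-cake bound $\sum_{k\in\Z} 2^k\gamma(O_k) \lesssim \|A(f)\|_{L^1(\gamma)}$, which will drive the final estimate.

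First I would fix an integer $p=p(\alpha)\ge 2$ large enough that the $\rho$-dilate (with $\rho$ the constant from Lemma~\ref{lem:Wpart} applied with $\lambda=2^{2p+2}\sqrt{n}$) of any cube produced by the Whitney decomposition of a $\lambda$-admissible Whitney set is contained in a concentric ball of $\B_\alpha$; Lemma~\ref{lem:mnp1} makes this elementary. Corollary~\ref{thm:Whitney} then covers each $O_k$ by $N=N(p,n)$ open $\lambda$-admissible Whitney sets $O_k^{(1)},\dots,O_k^{(N)}$, and Lemma~\ref{lem:Wpart} supplies, for each pair $(k,j)$, a countable family of pairwise disjoint Whitney cubes $\{Q_{k,j,m}\}_m$ and a subordinate partition of unity $\{\phi_{k,j,m}\}_m$. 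To each $Q_{k,j,m}$ I associate an admissible ball $B_{k,j,m}\in\B_\alpha$ concentric with $Q_{k,j,m}$ and containing the dilate $Q_{k,j,m}^*$.

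Next, decompose $D$ into the pairwise disjoint annular tents $T_k := T_1(O_k)\setminus T_1(O_{k+1})$ and use the partitions of unity to write
$$
f \;=\; \sum_{k\in\Z}\sum_{j=1}^{N}\sum_{m} \lambda_{k,j,m}\,a_{k,j,m}, \qquad a_{k,j,m} \;:=\; \frac{1}{\lambda_{k,j,m}}\,f\,\one_{T_k}\,\phi_{k,j,m},
$$
with normalization $\lambda_{k,j,m} := C\,2^k\,\gamma(B_{k,j,m})$ for a suitable absolute constant $C$. The support condition (i) of Definition~\ref{def:atom} follows from the Whitney property $d(Q_{k,j,m},\complement O_k^{(j)})\le \rho\,\diam(Q_{k,j,m})$: a point $(y,t)\in T_k$ with $y\in \supp(\phi_{k,j,m})\subseteq Q_{k,j,m}^*$ has $t$ controlled by $\diam(Q_{k,j,m})$, forcing $(y,t)\in T_1(B_{k,j,m})$. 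For (ii), the same Whitney property guarantees a point of $B_{k,j,m}$ that lies outside $O_{k+1}$, hence at which $A(f)\le 2^{k+1}$; integrating the defining expression of $A(f)$ at that point and applying the doubling Lemma~\ref{lem:doubling} to the admissible ball $B_{k,j,m}$ then yields $\|a_{k,j,m}\|_{L^q(D,d\gamma\,dt/t)}\le \gamma(B_{k,j,m})^{-1/q'}$.

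The main obstacle is precisely this doubling step: in the Euclidean argument one applies doubling to arbitrary balls, whereas in the Gaussian setting one may only apply it to balls in $\B_\alpha$. Every ball entering the $L^q$ normalization estimate must therefore be verified, upstream, to be admissible at the required scale---which is exactly the service rendered by the Gaussian Whitney machinery of Section~\ref{sec:adm_balls} via the specific choice of $p$. Once the atomic conditions are in place, a per-level count using bounded overlap of the balls $B_{k,j,m}$ and the disjointness of the cubes $Q_{k,j,m}$ yields $\sum_{j,m}\lambda_{k,j,m}\lesssim 2^k\gamma(O_k)$; summing over $k\in\Z$ and invoking the layer-cake bound gives $\sum_{k,j,m}\lambda_{k,j,m}\lesssim \|f\|_{T^{1,q}(\gamma)}$, establishing (i) and (ii) after relabelling.
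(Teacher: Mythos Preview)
Your strategy of covering the level sets $O_k$ by admissible Whitney pieces $O_k^{(j)}=O_k\cap W^{(j)}$ and then Whitney-decomposing each piece has a structural gap. The cubes $Q_{k,j,m}$ produced by Lemma~\ref{lem:Wpart} satisfy $\diam(Q)\sim d(Q,\complement O_k^{(j)})$, and the nearest boundary point of $O_k^{(j)}$ may lie on $\partial W^{(j)}$ rather than on $\partial O_k$. Near $\partial W^{(j)}$ the cubes are therefore arbitrarily small compared with $m(c_Q)$, while a point $(y,t)\in T_1(O_k)\cap D$ with $y\in Q_{k,j,m}^*$ is only constrained by $t<m(y)$ and $t\le d(y,\complement O_k)$, both of which can be of order $m(c_Q)\gg\diam(Q)$. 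Hence your claim that ``$t$ is controlled by $\diam(Q_{k,j,m})$'' fails, and with it the inclusion $\supp(a_{k,j,m})\subseteq T_1(B_{k,j,m})$. The same artificial boundary spoils the $L^q$ step: the Whitney property only supplies a point near $Q$ lying outside $O_k^{(j)}$, not outside $O_{k+1}$; and even granting such a point $x_0\in\complement O_{k+1}$, evaluating $A(f)(x_0)$ at a single point cannot dominate $\iint_{\supp a}|f|^q\,d\gamma\,\tfrac{dt}{t}$, since for small $t$ one need not have $x_0\in B(y,t)$.

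The paper circumvents both problems by localising $f$ rather than $O_k$. One first writes $f=\sum_W f\one_W$ in the $y$-variable over finitely many sets $W$ for which $W+\calC_{16}$ is admissible Whitney (Theorem~\ref{thm:A-Whitney}), and then shows that for each piece $g=f\one_W$ the density-enlarged level sets $O_k^{[\overline\eta]}$ of $\|Jg\|_q$ are automatically contained in the single admissible Whitney set $W+\calC_{16}$. One can then apply Lemma~\ref{lem:Wpart} to $O_k^{[\overline\eta]}$ itself, with no spurious boundary, so that $d(Q,\complement O_k^{[\overline\eta]})\sim\diam(Q)$ genuinely controls $t$. The $L^q$ normalisation is obtained not from a single point but via Lemma~\ref{lem:RFeta-estimate}, which converts the condition $(y,t)\notin T_{1-\eta}(O_{k+1}^{[\overline\eta]})$ into an integral over $F_{k+1}$ and permits a Fubini argument.
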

Before we start with the proof, we need some
notations and auxiliary results. Given a measurable set $A\subseteq \R^n$
and a real number $\alpha>0$, we define
\begin{equation*}
R_{\alpha}(A) = \{(y,t) \in \R^d\times (0,\infty): \ 
d(y,A) < \alpha t\} =  \complement T_{\alpha}(\complement A).
\end{equation*}
We also put, for any measurable set $A\subseteq \R^n$ and real
number $\beta>0$,
$$
A^{[\beta]} = \Big\{x \in \R^{n} : \frac{\gamma(A\cap
B)}{\gamma(B)} \geq \beta\, \hbox{ for all } B \in \B_{\frac32}\hbox{ with
centre $x$}\Big\}.
$$
We call $A^{[\beta]}$ the set of points of admissible
${\beta}$-density of $A$. Note that $A^{[\beta]}$ is a
closed subset of $\R^n$ contained in $\overline{A}$.

\begin{lemma} \label{lem:RFeta-estimate}
For all $\eta \in (\frac12,1)$ there exists an $\overline\eta \in
(0,1)$ such that, for all measurable sets $F \subseteq \R^{n}$ and
all non-negative measurable functions $H$ on $D$,
$$
\iint_{R_{1-\eta}( F^{[\overline\eta]})\cap D} H(y,t)\,d\gamma(y)\frac{dt}{t} 
\lesssim \int  _{F} \iint_{D}\frac{\one_{B(y,t)}(x)}{\gamma(B(y,t))}H(y,t)\,
d\gamma(y)\frac{dt}{t}d\gamma(x).
$$
\end{lemma}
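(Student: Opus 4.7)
The plan is to first apply Fubini's theorem to the right-hand side, rewriting it as
$$\iint_D \frac{\gamma(F \cap B(y,t))}{\gamma(B(y,t))}\, H(y,t)\, d\gamma(y)\, \frac{dt}{t}.$$
The lemma then reduces to the pointwise lower bound
$$\frac{\gamma(F \cap B(y,t))}{\gamma(B(y,t))} \gtrsim 1$$
for every $(y,t) \in R_{1-\eta}(F^{[\overline\eta]}) \cap D$, with implicit constant depending only on $\eta$, $\overline\eta$, and $n$.

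To prove this bound, fix such a point $(y,t)$. By definition of $R_{1-\eta}$ there exists $x \in F^{[\overline\eta]}$ with $|x-y| < (1-\eta)t$; since $(y,t) \in D$ we have $t < m(y)$, and therefore $|x-y| < (1-\eta)m(y)$. Applying Lemma \ref{lem:mnp1}(ii) with the roles of $x$ and $y$ swapped and $b = 1-\eta$ yields $m(y) \le (2-\eta) m(x)$, and hence $m(x) > t/(2-\eta)$. A short verification (valid for every $\eta \in (1/2,1)$) then shows $\eta t \le \tfrac{3}{2}m(x)$, so that $B(x,\eta t) \in \B_{3/2}$. On the other hand, $|x-y| < (1-\eta)t$ gives $B(x,\eta t) \subseteq B(y,t)$, and therefore by the defining property of $F^{[\overline\eta]}$,
$$\gamma(F \cap B(y,t)) \ge \gamma(F \cap B(x,\eta t)) \ge \overline\eta\, \gamma(B(x,\eta t)).$$

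It remains to compare $\gamma(B(x,\eta t))$ with $\gamma(B(y,t))$: these two balls intersect, $B(x,\eta t) \in \B_{3/2}$, and the ratio of their radii is $1/\eta$, so the doubling Lemma \ref{lem:doubling} provides a constant $d = d_{3/2,1/\eta,n}$ with $\gamma(B(y,t)) \le d\, \gamma(B(x,\eta t))$. Combining the estimates yields the required pointwise bound with implicit constant $\overline\eta / d$; any fixed $\overline\eta \in (0,1)$ will do, so one may simply take $\overline\eta = \tfrac12$. The main obstacle throughout is keeping every ball under consideration admissible at a uniformly bounded scale so that Lemma \ref{lem:doubling} can be invoked, and Lemma \ref{lem:mnp1} is precisely the bookkeeping tool that makes this possible.
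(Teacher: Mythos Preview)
Your proof is correct and follows the same overall architecture as the paper's: apply Fubini's theorem to the right-hand side and reduce to the pointwise lower bound $\gamma(F\cap B(y,t))\gtrsim \gamma(B(y,t))$ for $(y,t)\in R_{1-\eta}(F^{[\overline\eta]})\cap D$, using a nearby density point $x\in F^{[\overline\eta]}$ and the admissibility/doubling machinery.

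The one genuine difference is in how the density condition at $x$ is exploited. The paper applies it to the ball $B(x,t)\in\B_{3/2}$; since this ball is \emph{not} contained in $B(y,t)$, an extra inclusion--exclusion step is needed, and that step only produces a positive lower bound when $\overline\eta$ is taken sufficiently close to~$1$. You instead apply the density condition to the smaller ball $B(x,\eta t)$, which you verify is also in $\B_{3/2}$ and which is \emph{contained} in $B(y,t)$ because $|x-y|<(1-\eta)t$. This sidesteps the inclusion--exclusion entirely and shows, as a small bonus, that every $\overline\eta\in(0,1)$ works. Your use of Lemma~\ref{lem:mnp1}(ii) to establish $B(x,\eta t)\in\B_{3/2}$ is a clean alternative to the paper's direct estimate $|x|\le (1-\eta)t+1/t\le \tfrac32/t$; both arguments rely on $\eta>\tfrac12$.
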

\begin{proof} First let $\overline\eta\in (0,1)$ be arbitrary and fixed.
Let $(y,t) \in R_{1-\eta}(F^{[\overline\eta]})\cap D$. Note that $(y,t)\in D$
implies $B(y,t)\in\B_1$. There exists $x \in
F^{[\overline\eta]}$
such that $|y-x|<(1-\eta)t$. Notice first that, since $t \leq m(y),$ we have
$|x| <
(1-\eta)t+\frac{1}{t} \leq \frac12 + \frac1t \le \frac32 \frac1{t}$. We thus
have that $t
\in (0,\frac32 m(x))$. Moreover $B(x,\eta t) \subseteq B(y,t)
\subseteq B(x,\frac32 t)$, and thus $B(y,t) \in \B_1$,
$B(x,t) \in \B_{\frac32}$, and $\gamma(B(x,t)) \eqsim
\gamma(B(y,t))$ by repeated application of the doubling
property on admissible balls (Lemma \ref{lem:doubling}). We therefore have
\begin{equation*}
\begin{split}
\gamma(F \cap B(y,t)) &\geq \gamma(F\cap B(x,t))-\gamma(B(x,t)\cap \complement
B(y,t)) \\
& \geq \overline{\eta}\gamma(B(x,t))-\gamma(B(x,t))+\gamma(B(x,t)\cap B(y,t)) \\
& \geq (\overline{\eta}-1)\gamma(B(x,t))+\gamma(B(x,\eta t)).
\end{split}
\end{equation*}
Now, picking $\overline{\eta}$ close enough to $1$ and using
the doubling property, we obtain
a constant $c=c(\eta, n) \in (0,1)$ such that
$$\gamma(F\cap B(y,t)) \geq c\gamma(B(x,t)).$$
Therefore, there exists a constant $c'= c'(\eta,n)>0$ such that
$ \gamma(F\cap B(y,t)) \geq c'\gamma(B(y,t))$ for all
$(y,t)\in R_{1-\eta}(F^{[\overline\eta]})\cap D.$ Finally,
\begin{equation*}
\begin{split}
\int  _{F} \iint_{D} \frac{\one_{B(y,t)}(x)}
{\g(B(y,t)}H(y,t)\,d\gamma(y)\frac{dt}{t}d\gamma(x) 
&= \iint_{D} \frac{\gamma(F\cap B(y,t)
)}{\g(B(y,t)}H(y,t)\,d\gamma(y)\frac{dt}{t} \\
& \ge c'
\iint_{R_{1-\eta} (F^{[\overline\eta]})\cap D}H(y,t)\,d\gamma(y)\frac{dt}{t}.
\end{split}
\end{equation*}
\end{proof}
  
\begin{lemma}\label{lem:atomic-alpha}
If a function $f\in  T^{1,q}(\g)$ admits a decomposition
in terms of $ T^{1,q}(\g)$ $\a$-atoms for some $\a>1$, 
then it admits a decomposition
in terms of $ T^{1,q}(\g)$ $\a$-atoms for all $\a>1$.
\end{lemma}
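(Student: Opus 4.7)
The natural plan is to split on whether the target aperture $\a$ is larger or smaller than the value $\a_0>1$ for which an atomic decomposition $f=\sum_n \l_n a_n$ is given. If $\a\ge \a_0$ then $\B_{\a_0}\subseteq \B_\a$, so every $\a_0$-atom is automatically an $\a$-atom and the given decomposition already serves. The substantive case is $1<\a<\a_0$, and there it suffices to show that each $\a_0$-atom $a$ supported in $T_1(B)\cap D$, with $B=B(c,r)\in\B_{\a_0}$, can be written as a \emph{finite} sum $a=\sum_{j=1}^N \mu_j b_j$ of $\a$-atoms with $\sum_j|\mu_j|$ bounded by a constant $C(\a,\a_0,n)$; concatenating this with the given decomposition of $f$ then produces the required $\a$-atomic decomposition. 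If $r\le \a m(c)$ then $B\in\B_\a$ and $a$ is itself an $\a$-atom, so I may assume $\a m(c)<r\le \a_0 m(c)$.

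The crux is a finite covering of $T_1(B)\cap D$ by tents based at balls in $\B_\a$. Fix $\e\in(0,\sqrt{\a}-1]$, so that $(1+\e)^2\le\a$. Since $|y-c|<\a_0 m(c)$ for every $y\in B$, Lemma~\ref{lem:mnp1}(ii) gives $m(y)\ge m(c)/(1+\a_0)$, and hence a maximal $\tfrac{\e m(c)}{1+\a_0}$-separated subset $\{c_1,\dots,c_N\}\subseteq B$ (with $N=N(\a,\a_0,n)$) has the property that every $y\in B$ lies within $\e m(y)$ of some $c_j$. Setting $B_j:=B(c_j,\a m(c_j))\in\B_\a$, for each $(y,t)\in T_1(B)\cap D$ a second application of Lemma~\ref{lem:mnp1}(ii) yields $m(y)\le(1+\e)m(c_j)$, and combined with the height constraint $t<m(y)$ built into $D$,
$$ |y-c_j|+t \,\le\, (1+\e)m(y) \,\le\, (1+\e)^2 m(c_j) \,\le\, \a m(c_j),$$
which means $(y,t)\in T_1(B_j)$. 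Hence $T_1(B)\cap D\subseteq\bigcup_{j=1}^N T_1(B_j)$.

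Given this covering, I will set $E_j:=\bigl(T_1(B_j)\setminus\bigcup_{i<j}T_1(B_i)\bigr)\cap T_1(B)\cap D$, $\mu_j:=\g(B_j)^{1/q'}\n\one_{E_j} a\n_{L^q(D,\,d\g\,dt/t)}$, and $b_j:=\mu_j^{-1}\one_{E_j} a$, so that each $b_j$ is an $\a$-atom associated to $B_j$ and $a=\sum_{j=1}^N \mu_j b_j$. The bound $\sum_j|\mu_j|\lesssim 1$ then follows from H\"older's inequality in the index $j$, from the disjointness of the $E_j$ (giving $\sum_j\n\one_{E_j}a\n_{L^q}^q=\n a\n_{L^q}^q\le\g(B)^{-q/q'}$), and from the doubling estimate $\g(B_j)\lesssim\g(B)$ delivered by Lemma~\ref{lem:doubling} applied with $B\in\B_{\a_0}$ as the admissible ball, once one notes that $r_{B_j}\le\a(2+2\a_0)m(c)\le 2(1+\a_0)r$ (using our standing assumption $r>\a m(c)$). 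I expect the covering step to be the main obstacle: without the height cap $t<m(y)$ coming from the domain $D$, one could not cover $T_1(B)$ uniformly by only finitely many tents over balls admissible at the strictly smaller scale $\a$.
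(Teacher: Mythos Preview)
Your argument is correct and follows the same overall strategy as the paper: the easy direction $\a\ge\a_0$ is identical, and for $1<\a<\a_0$ both proofs reduce to covering $T_1(B)\cap D$ by a bounded number of tents $T_1(B_j)$ with $B_j\in\B_\a$, then split the atom along the covering. Your checks of the covering step and of the doubling estimate $\gamma(B_j)\lesssim\gamma(B)$ (via the standing assumption $r>\a m(c)$ so that $r_{B_j}\le 2(1+\a_0)r$) are sound.

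Where the two proofs differ is in how the covering is produced. The paper separates the cases $|c|\le R$ and $|c|>R$ for a large threshold $R$: the bounded region $\{(y,t)\in D:\ |y|\le R\}$ is covered once and for all by finitely many $\a$-tents, while for $|c|>R$ the authors work with explicit bounds $m(x)=1/|x|$ and a monotonicity argument to show that a $\delta r'$-neighbourhood of each $c'\in B$ lands in $T_1(B')$. By contrast, you handle all $c$ uniformly via a single $\frac{\e m(c)}{1+\a_0}$-net and two applications of Lemma~\ref{lem:mnp1}(ii); the height cap $t<m(y)$ from $D$ then gives $|y-c_j|+t<(1+\e)^2 m(c_j)\le\a m(c_j)$ directly. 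This is cleaner and avoids the large-$R$ splitting entirely. On the decomposition side, the paper uses the multiplicity function $n(x)^{-1}$ to partition the atom, whereas you take disjoint pieces $E_j$; both work equally well for the $\ell^1$ bound.
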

\begin{proof}
Suppose that $f\in  T^{1,q}(\g)$ admits a decomposition
in terms of $T^{1,q}(\g)$ $\beta$-atoms for some $\beta>1.$ We will show that
$f$ admits a decomposition in terms of $ T^{1,q}(\g)$ $\alpha$-atoms for any
$\alpha > 1$. 
This is immediate if $\alpha \geq \beta,$ since in this case any  $T^{1,q}(\g)$
$\beta$-atom is a $T^{1,q}(\g)$ $\alpha$-atom as well. 

Let us now assume that 
$1 < \alpha < \beta.$ We claim that it suffices to show that there exists an integer 
$N,$ depending only upon $\alpha,\beta,$ and the dimension $n$, such that if 
$B\in \B_\b$, then $T_1(B)\cap D$
can be covered by at most $N$ tents of the form $T_1(B')$ with $B' = B(c',r')
\in \B_\a$ satisfying $r'=\a m(c')$. 

To prove the claim, it clearly suffices to consider the case that $f$ is a 
$T^{1,q}(\gamma)$ $\beta$-atom having support in $T_1(B) \cap D$ for some ball 
$B \in \B_\beta$ with center $c$ and radius $r = \beta m(c).$ Let $\{T_1(B_1'), 
\ldots, T_1(B_N')\}$ be a covering of $T_1(B),$ where each $B_j',$ $j=1, \ldots, N,$ 
is a ball in $\B_\alpha$ with center $c_{j}$, radius $r_j = \alpha m(c_j)$, and intersecting $B$. 
For $x \in T_1(B)\cap D$ we set  $n(x) := \#\{ 1 \leq j\leq N : x \in T_1(B_j')\}$ 
and $f_j(x) := n^{-1}(x) f(x)\one_{T_1(B_j')}(x).$ It then follows that 
$f= \sum_{j=1}^N f_j.$ Moreover, each $f_j$ is a $T^{1,q}(\gamma)$ $\alpha$-atom, 
since $f_j$ is supported in $T_1(B_j) \cap D$ and 
\begin{align*}
 \|f_j\|_{L^q(D, \frac{d\gamma \,dt}{t})}
   \leq  \|f\|_{L^q(D, \frac{d\gamma \,dt}{t})}
   \leq \gamma(B)^{-1/q'}
   \lesssim \gamma(B_j')^{-1/q'}.
\end{align*}
To obtain the latter estimate, we pick an arbitrary $b \in B_{j}'\cap B$ and use Lemma \ref{lem:mnp1}(ii) to conclude that $m(c_j) \leq (1+\alpha) m(b) \leq 2(1+\alpha)(1+\beta) m(c)$,
and then we estimate
\begin{align*}
 r_j = \alpha m(c_j) 
 	\leq 2\alpha(1+\alpha) (1+\beta) m(c) 
	   = 2\alpha  \beta^{-1}(1+\alpha)(1+\beta) r.
\end{align*}
Combined with Lemma \ref{lem:doubling}, we infer that $\gamma(B_j) \lesssim \gamma(B).$ 
It follows that $f = \sum_{j=1}^N f_j$ is a decomposition in terms of 
$T^{1,q}(\g)$ $\a$-atoms, which proves the claim.

Fix $R>1+ \beta$ so large that 
$\alpha(R-\beta)/(R-\beta+\alpha)>1$. The set $\{(y,t) \in D : |y| \leq
R\}$ can be covered with finitely many sets -- their number depending only
upon $R,$ $n$ and $\alpha$ -- of the form $T_1(B')$ with $B'=B(c',r') \in
\B_\alpha$ and $r'=\a m(c')$.

Take a ball $B = B(c,r) \in \B_\beta$ with $|c|\ge R$ and choose
$\delta \in (0,1)$ so small that
$(1-\delta)\alpha(R-\beta)/(R-\beta+\alpha)>1$.
We first remark that, if $x \in B$, then ${|x|} \geq {R-\beta}
\geq 1$, and therefore $m(x) = \frac{1}{|x|}.$
Let us then define 
$$
C_{B} := \{(x,t)\in B\times (0,\infty) \;:\; t\leq m(x)\}.
$$
Noting that $T_1(B)\cap D\subseteq C_B$, it remains to cover $C_{B}$ 
with $N$ tents $T_{1}(B')$ based on balls $B' \in
\B_{\alpha}$, where the number $N$ depends on $\a$, $\b$ and $n$ only. 

To do so, let us start by picking $c'\in B$, and let
$r'=\alpha m(c') = \frac{\alpha}{|c'|}$ and $B'=B(c',r')$.
If $(x,t) \in C_{B}$ is such that $|x-c'|\leq \delta r'$, then
\begin{align*}
d(x,\complement B') & = d(c',\complement B')-|x-c'| \geq
(1-\delta)r' = (1-\delta)\frac{\a}{|c'|} \\
& \geq (1-\delta) \frac{\a}{|x|+|x-c'|} \geq
m(x)(1-\delta)(\frac{\a|x|}{|x|+\alpha}) \\
&\geq m(x)(1-\delta)\frac{\a(R-\beta)}{R-\beta+\alpha} \geq m(x) \geq t.
\end{align*}
Here we used the monotonicity of the function $t\mapsto t/(t+\a)$.

We have proved that a point $(x,t) \in C_{B}$ belongs to $T_{1}(B')$ 
whenever $|x-c'|\leq \delta r'$.
Using that $(|c|+ \beta)r \le (|c|+ \beta)\frac{\b}{|c|} \leq \b + \beta^2$, we have
$$
r'= \frac{\alpha}{|c'|} \geq \frac{\alpha}{|c|+\beta} \geq
\frac{\alpha}{\b+\beta^2}r.
$$
This implies that $B$ can be covered with $N$ balls $B'=B(c',\delta r')$
as above, with $N$
depending only on $\alpha,$ $\beta$ and $n$.
The union of the $N$ sets $T_1(B')\cap D$ will then cover $C_B$, thus completing 
the proof. 
\end{proof}

\begin{proof}[Proof of Theorem \ref{thm:atomic}]
By Lemma \ref{lem:atomic-alpha} it suffices to prove that each
$f\in  T^{1,q}(\g)$ admits 
a decomposition in terms of $ T^{1,q}(\g)$ $\a$-atoms for {\em some} $\a>0$.

Recall that the disjoint sets $A_{p}^{(i)}$ have been introduced in Definition
\ref{def:i-cube}. We shall apply Theorem \ref{thm:A-Whitney} with $p=4$ 
(the reason for this choice is the constant $16 = 2^4$ produced in the argument below).
Since  
$$\big(\bigcup_{0\le l\le 5} L_l\big) \cup\big(\bigcup_{i\in
\{1,\dots,2^8\}^n}A_{4}^{(i)}\big) = \R^n$$ we may write 
\begin{align}\label{eq:dec-f} 
f = f \one_{\{\n Jf\n_q >0\}} = \sum_{0\le l\le 5}\sum_{Q\in \Delta_{0,l}^\g}
f \one_{Q\cap \{\n Jf\n_q >0\}} + 
\sum_{i\in \{1,\dots,2^8\}^n} f \one_{{A_{4}^{(i)}}\cap\{ \n Jf\n_q >0\}},
\end{align}
where
$f\one_{\{\n Jf\n_q >0\}}(x,t) := f(x,t)\one_{\{\n Jf\n_q >0\}}(x)$ and $\{\n Jf\n_q >0\} = \{x\in\R^n: \ \n
Jf(x)\n_{L^q(D,d\gamma(y)\frac{dt}{t})} > 0\}$. 
The first equality in \eqref{eq:dec-f}, which holds almost everywhere on $D$, 
is justified as follows.
For all $x\in V:= \{\n Jf\n_q=0\}$ 
we have $\one_{B(y,t)}(x)f(y,t) = 0$ for almost all $(y,t)\in D$, and
therefore,
by Fubini's theorem, for almost all $y\in\R^d$ we have  
$\one_{B(y,t)}(x)f(y,t) = 0$ for almost all $t>0$.
Fix $\d>0$ arbitrary.  Then for almost all $y\in B(x,\d)$ 
we have $f(y,t) = 0$ for almost all $t\ge \d$. By another application of
Fubini's 
theorem this implies
that $f(y,t)=0$ for almost all $(y,t)\in (B(x,\d)\times [\d,\infty))\cap
D$.
Taking the union over all rational $\d>0$, it follows that $f\equiv 0$ almost
everywhere on 
$\Gamma_{x} := \{(y,t)\in D: \ |x-y|<t\}$, the `admissible cone' over
$x$. 
If $K$ is any compact set contained in $V$, then by taking the union over a 
countable dense set of points $x\in K$ it follows 
that $f(y,t)=0$ almost everywhere on the `admissible cone' over $K$. Finally,
by the inner regularity of the Lebesgue measure on $\R^n$, 
it follows that $f(y,t)=0$ almost everywhere on the `admissible cone' over
$V$. In particular this gives
$f(x,t)=0$ for almost all $(x,t)\in D$ with $x\in K$. This proves the first 
identity in \eqref{eq:dec-f}.  

To prove the theorem it suffices to prove that each of the summands on the
right-hand side of \eqref{eq:dec-f}
has an atomic decomposition. In view of Theorem \ref{thm:A-Whitney}
(applied with $p=4$) it even
suffices to prove that
$$g:= f \one_{W\cap \{\n Jf\n_q >0\}}$$ has an atomic decomposition for any given
measurable set $W$ in $\R^n$ such that $W+\calC_{16}$ is $2^{10}\sqrt{n}$-admissible Whitney. 

Given $k\in\Z$, let us define 
$$ O_{k} := \{\|Jg\|_{q}>2^{k}\}$$ and $ F_{k} := \complement  O_{k}$. Fix an
arbitrary $\eta \in (\frac12,1)$ 
and let $\overline{\eta}$ be as
in Lemma \ref{lem:RFeta-estimate}. With abuse of notation we let
$ O_{k}^{[\overline\eta]} := \complement  F_{k}^{[\overline\eta]}$,
where $ F_{k}^{[\overline\eta]}$ denotes the set of points of
admissible $\overline{\eta}$-density of $F_{k}$, and note that 
$O_{k}\subseteq O_{k}^{[\overline\eta]}$.
We claim that  $ O_{k}^{[\bar\eta]}$ is contained in $W+\calC_{16}$ (see
\eqref{eq:AplusB}). 

To prove the claim we first fix $x\in O_{k}$ 
and check that $x\in W+\calC_2$.
Indeed, since $Jg(x)$ does not vanish almost everywhere on $D$ 
we can find a set $D'\subseteq D$ of positive measure such that for almost all 
$(y,t)\in D'$ one has $\one_{B(y,t)}(x)g(y,t) =  
\one_{B(y,t)}(x)f(y,t)\one_{W\cap\{\n Jf\n_q>0\}}(y) \not=0$. 
For those points we have $y\in W$, 
$|x-y|<t$ and $t<m(y)$, so $t<2m(x)$ by Lemma \ref{lem:mnp1} (i).
Thus $B(x,t)$ belongs to $\B_2$ and intersects $W$, so $x\in W+\calC_2$.

Next let $x\in O_{k}^{[\overline\eta]}$.
Then $x$ is not a point of admissible $\overline{\eta}$-density of $F_{k}$,
so there is a ball $B\in \B_\frac32$ with centre $x$ such that 
$\g(F_{k}\cap B) < \bar\eta \g(B).$
This is only possible if $B$ intersects $O_{k} = \complement F_{k}$.
Since $O_{k}$ is contained in $W+\calC_2$, this
means that $B$ intersects
$W+\calC_{2}$. Fix an arbitrary $x'\in B\cap (W+\calC_{2})$
and let $B' \in \mathcal{B}_{2}$ be any admissible ball centred at $x'$ and 
intersecting $W$.
From $x'\in B$ and $B \in \B_{\frac32}$ it follows that $|x-x'| <\frac32m(x)$. Also, 
since $B'$ belongs to $\B_2$ 
and intersects $W$, $d(x',W)<2m(x')$. It follows that $d(x,W)<\frac32 m(x)+2m(x')$.
By the second part of Lemma \ref{lem:mnp1}(ii) 
we have $m(x')\le 5 m(x)$  
and therefore 
$\dist(x,W) \le \frac{23}{2}m(x)$.
This proves the claim (with a 
somewhat better constant, but that is irrelevant).

For each $N\ge 1$ define $g_N(y,t) := \one_{\{|y|\le
N\}}\one_{\{|g|\le N\}} \one_{(\frac{1}{N}, \infty)}(t) g(y,t)$. Clearly, $
g_N\in T^{1,q}(\g)$
and, by dominated convergence, $\lim_{N\to\infty} g_N = g$ in
$ T^{1,q}(\g)$. Defining the sets $ F_{k,N}$, $ O_{k,N}$,
$ F_{k,N}^{[\overline\eta]}$, $ O_{k,N}^{[\overline\eta]}$ in
the same way as above, Lemma \ref{lem:RFeta-estimate} gives that
\begin{equation*}
\begin{split}
\iint_{R_{1-\eta}( F_{k,N}^{[\overline\eta]})\cap D} & |g_N(y,t)|^{q}
d\gamma(y)\frac{dt}{t} \\
& \lesssim \int  _{ F_{k,N}} \iint_{D}
\frac{\one_{B(y,t)}(x)}{\gamma(B(y,t))} |g_N(y,t)|^{q}
d\gamma(y)\frac{dt}{t}d\gamma(x)  \lesssim
\n g_N\n_{ T^{1,q}(\g)}^q.
\end{split}
\end{equation*}
As $k\to-\infty$, the middle term tends to $0$ and
therefore the support of $g_N$ is contained in the union
 $\bigcup
_{k \in \Z} T_{1-\eta}( O_{k,N}^{[\overline\eta]})\cap D. $ Clearly,
$ O_{k,N}\subseteq  O_k$ implies
$T_{1-\eta}( O_{k,N}^{[\overline\eta]})\subseteq
T_{1-\eta}( O_{k}^{[\overline\eta]})$, and therefore a limiting
argument shows that the support of $g$ is contained in the union $\bigcup _{k
\in \Z} T_{1-\eta}( O_{k}^{[\overline\eta]})\cap D$.

Choose cubes $(Q_{k}^{m})_{m}$ 
and functions $(\phi_{k}^{m})_{m}$
as in Lemma \ref{lem:Wpart}, applied to the open sets $ O_k^{[\overline\eta]},$ 
which are contained in $W+\calC_{16}$.
Define for $(y,t) \in D,$
\begin{equation*}
\begin{split}
b_{k}^{m}(y,t) & :=
 (\one_{T_{1-\eta}( O_k^{[\overline\eta]})}(y,t)
 -\one_{T_{1-\eta}( O_{k+1}^{[\overline\eta]})}(y,t))\phi_{k}^{m}(y) g(y,t),\\
\mu_{k}^{m} &:= \iint_{D}
|b_{k}^{m}(y,t)|^q\,d\gamma(y)\frac{dt}{t},\\
\end{split}
\end{equation*}
and put
\begin{equation*}
\lambda_{k}^{m} :=
(\gamma(Q_{k}^{m}))^{\frac{1}{q'}}(\mu_{k}^{m})^{\frac{1}{q}}, \quad
a_{k}^{m}(y,t) := \frac{b_{k}^{m}(y,t)}{\lambda_{k}^{m}}.
\end{equation*}
Then,
$$g = \sum  _{k \in \Z} \sum  _{m}
\lambda_{k}^{m}a_{k}^{m}.
$$

Let $C$ be a constant to be determined later and denote by
$(Q_{k}^{m})^{**}$ the cube which has the same center as
$Q_{k}^{m}$ but side-length multiplied by $C$. Let us further
denote by $\ell_k^m$ and $\d_{k}^{m}$ the sidelength and the 
length of the diagonal of
$Q_{k}^{m}$, respectively, and by $c_{k}^{m}$ the center of $Q_k^m$. 
We claim that
$$\supp(a_{k}^{m}) \subseteq T_{1}((Q_{k}^{m})^{**}).$$ 
We have
$$\supp(a_{k}^{m}) \subseteq T_{1-\eta}( O_k^{[\overline\eta]}) \cap \{(y,t) \in
D: \  y \in (Q_{k}^{m})^{*}\},$$
where $(Q_{k}^{m})^{*}$ is as in Lemma \ref{lem:Wpart}.
Therefore, fixing $(y,t) \in \supp(a_{k}^{m})$, we have
$d(y,F_k^{[\overline\eta]}) \geq (1-\eta)t$ and $y \in
(Q_{k}^{m})^{*}$. For $z \not \in (Q_{k}^{m})^{**}$ this gives
$d(z,c_k^m)\ge \frac12C\ell_k^m  = \frac12 Cd_k^m /\sqrt{n}$ 
 and
\begin{equation}\label{eq:determineC}
d(y,z) \geq d(z,c_{k}^{m})-d(y,c_{k}^{m}) \geq
\big(\frac{C}{\sqrt{n}}-\rho\big)\frac{\d_{k}^{m}}{2},
\end{equation}
where $\rho=\rho_{2^{10} \sqrt{n},n}$ 
is the constant from Lemma \ref{lem:Wpart}.
Moreover, by property (ii) in Lemma \ref{lem:Wpart},
$$
d(c_{k}^{m}, F_k^{[\overline\eta]}) \leq (\rho + \frac12) \d_{k}^{m}.
$$
For $u \in  F_k^{[\overline\eta]}$ such that $d(c_{k}^{m},u)
\leq (\rho + \frac12) \d_{k}^{m} + \e$, this gives
\begin{equation}\label{eq:determineC2}
(1-\eta)t \leq d(y,F_k^{[\overline\eta]})\le  d(y,u) \leq
d(y,c_{k}^{m})+d(c_{k}^{m},u)
 \leq\frac{3\rho + 1}{2} \d_k^m+ \e.
\end{equation}
Upon taking 
$C=2\sqrt{n}(\frac{\rho}{2}+\frac{3 \rho + 1}{2(1-\eta)})$, 
from \eqref{eq:determineC} and \eqref{eq:determineC2} (where we let
$\e\downarrow 0$) we infer that $$d(y,z) \ge 
\frac{3\rho +1}{2(1-\eta)}\delta_k^m \ge t.$$
This means that $(y,t)\in T_{1}((Q_{k}^{m})^{**})$, thus proving 
the claim.

Using the definitions of $\l_{k}^{m}$ and $a_{k}^{m}$ together
with the doubling property for admissible balls, we also get
that
$$
\iint_{D}|a_{k}^{m}(y,t)|^{q}d\gamma(y)\frac{dt}{t} \leq
\frac{1}{\gamma(Q_{k}^{m})^{\frac{q}{q'}}} \lesssim
\frac{1}{\gamma((Q_{k}^{m})^{**})^{\frac{q}{q'}}}.
$$
Up to a multiplicative constant, the $a_{k}^{m}$ are thus $T^{1,q}(\gamma)$
$\a$-atoms for some $\a = \a(C,n)>0$. To get the
norm estimates, we first use Lemma \ref{lem:RFeta-estimate}. Noting that
$(y,t)\in {\rm supp}(b_k^m)$ implies 
$(y,t) \not\in T_{1-\eta}( O_{k+1}^{[\overline\eta]})$ and hence
 $(y,t)\in R_{1-\eta}( F_{k+1}^{[\overline \eta]})$, and that   
$(y,t)\in T_{1}((Q_{k}^{m})^{**})$ and $x\in B(y,t)$ imply
$x\in(Q_{k}^{m})^{**},$ we obtain
\begin{align*}
\mu_{k}^{m} &\leq
\iint_{R_{1-\eta}( F_{k+1}^{[\overline \eta]})\cap D}
\one_{T_{1}((Q_{k}^{m})^{**})}(y,t)|g(y,t)|^{q}d\gamma(y)\frac{dt}{t} \\
&\lesssim \int  _{ F_{k+1}}
 \iint_{D}\!\!
\frac{\one_{B(y,t)}(x)\one_{T_{1}((Q_{k}^{m})^{**})}(y,t)}{\gamma(B(y,t))}|g(y,
t)|^{q}d\gamma(y)\frac{dt}{t}\,d\gamma(x) \\
& \leq \int
_{ F_{k+1}\cap(Q_{k}^{m})^{**}}\| J g(x)\|_{L^q(D,d\gamma\frac{dt}{t}))}
^{q} d\gamma(x) 
\\ & \le 2^{q(k+1)}\gamma((Q_{k}^{m})^{**})
\lesssim 2^{qk}\gamma(Q_{k}^{m}).
\end{align*}
This then gives
$$
   \sum  _{k \in \Z} \sum  _{m}
\lambda_{k}^{m} =
   \sum  _{k \in \Z}\sum  _{m}
(\mu_{k}^{m})^\frac1q(\gamma(Q_{k}^{m}))^{\frac1{q'}}
    \lesssim \sum  _{k \in
\Z}2^{k}\gamma(O_k^{[\overline\eta]}).$$ 
Since $
x\in O_k^{[\overline\eta]}$ implies $ M_{\frac32}(\one_{ O_{k}})(x) 
\ge 1-\overline{\eta},$ the weak type 1-1
of the Hardy-Littlewood maximal function $M_{\frac32}$ defined by
using only $\B_{\frac32}$-balls (which is proved by copying its
euclidean counterpart, see \cite[Theorem 3.1]{MM}), gives 
$(1 - \overline \eta)\gamma( O_k^{[\overline\eta]}) \lesssim
\gamma( O_{k})$ and thus
$$
(1-\overline\eta)\sum  _{k \in \Z} \sum  _{m}
\lambda_{k}^{m} 
\lesssim \sum_{k\in\Z} 2^{k}\gamma( O_{k})
 \lesssim \int  _{0} ^{\infty} \gamma(x \in \R^{n}: \ 
\|  J g(x)\|_{q} > s)\,ds = \|g\|_{ T^{1,q}(\gamma)}.
$$
\end{proof}

As an application of the atomic decomposition we prove next a change of aperture
theorem.
Our proof is different from the euclidean proofs in \cite{CMS} and \cite{HTV} in
that we derive the result directly from the atomic decomposition theorem.

\begin{definition} For $\a>0$,
the {\em Gaussian tent space} $T_\a^{1,q}(\gamma)$ with {\em aperture $\a$} 
is the completion of $C_{\rm c}(D)$ with respect to the norm
$$
\|f\|_{T_\a^{1,q}(\gamma)} := \|J_\a f\|_{L^{1}(\R^n, d\gamma(x);L^{q}(D,
d\gamma(y)\frac{dt}{t}))},
$$
where $$
J_\a f(x,y,t) := \frac{\one_{B(y,\a
t)}(x)}{{\gamma(B(y,t))}^{\frac{1}{q}}}f(y,t), \quad f\in C_{\rm c}(D). $$
\end{definition}

\begin{theorem}[Change of aperture]\label{thm:aperture} 
For all $1 < \a_0 < \a$ we have $T_\a^{1,q}(\g) = T_{\a_0}^{1,q}(\g)$ with 
equivalent norms.
\end{theorem}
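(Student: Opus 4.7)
The plan is to deduce the theorem from the atomic decomposition (Theorem~\ref{thm:atomic}) together with one doubling estimate on admissible balls. The inclusion $T_\a^{1,q}(\g)\hookrightarrow T_{\a_0}^{1,q}(\g)$ is immediate: since $\a_0<\a$ we have the pointwise comparison $\one_{B(y,\a_0 t)}(x)\le \one_{B(y,\a t)}(x)$, which yields $\|f\|_{T_{\a_0}^{1,q}(\g)}\le \|f\|_{T_\a^{1,q}(\g)}$ for every $f$. Thus the whole matter reduces to establishing the reverse bound $\|f\|_{T_\a^{1,q}(\g)}\lesssim \|f\|_{T_{\a_0}^{1,q}(\g)}$ for all $f \in T_{\a_0}^{1,q}(\g)$.

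Since $\a_0>1$, the same pointwise comparison applied with aperture $1$ on the left gives $\|f\|_{T^{1,q}(\g)}\le\|f\|_{T_{\a_0}^{1,q}(\g)}$, so every $f\in T_{\a_0}^{1,q}(\g)$ lies in $T^{1,q}(\g)$. I apply Theorem~\ref{thm:atomic} with admissibility parameter $\a'=2$ to obtain a decomposition $f=\sum_{n\ge 1}\lambda_n a_n$ into $T^{1,q}(\g)$ $2$-atoms with $\sum_n|\lambda_n|\lesssim \|f\|_{T^{1,q}(\g)}\le \|f\|_{T_{\a_0}^{1,q}(\g)}$. Since $T_\a^{1,q}(\g)$ is complete, it suffices to prove the uniform bound $\|a\|_{T_\a^{1,q}(\g)}\lesssim 1$ for every such atom: the series then converges absolutely in $T_\a^{1,q}(\g)$, and its limit is forced to equal $f$ by comparison inside $T^{1,q}(\g)$.

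To estimate a single atom $a$ supported in $T_1(B)\cap D$ with $B=B(c_B,r_B)\in \B_2$ and $\|a\|_{L^q(D,\frac{d\g\,dt}{t})}\le \g(B)^{-1/q'}$, note that $(y,t)\in T_1(B)$ forces $y\in B$ and $t\le r_B-|y-c_B|$, hence $B(y,\a t)\subseteq \widetilde B:=B(c_B,(1+\a)r_B)$. Restricting the $x$-integration to $\widetilde B$ and applying H\"older's inequality in $x$ followed by Fubini yields
$$\|a\|_{T_\a^{1,q}(\g)}\le \g(\widetilde B)^{1/q'}\left(\iint_D \frac{\g(B(y,\a t))}{\g(B(y,t))}|a(y,t)|^q\,d\g(y)\frac{dt}{t}\right)^{1/q}.$$
Lemma~\ref{lem:doubling} applied twice disposes of the remaining ratios: since $(y,t)\in D$ means $B(y,t)\in\B_1$, the ratio $\g(B(y,\a t))/\g(B(y,t))$ is bounded by a constant depending only on $\a$ and $n$; since $B\in\B_2$, the ratio $\g(\widetilde B)/\g(B)$ is likewise bounded. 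Combined with the size bound on $a$, this gives $\|a\|_{T_\a^{1,q}(\g)}\lesssim 1$.

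The main point of care is simply keeping each doubling invocation inside the admissible regime; this is guaranteed automatically because $(y,t)\in D$ makes $B(y,t)\in\B_1$ and the atom condition makes $B\in\B_2$. Beyond that, the argument is the standard H\"older-plus-doubling estimate for atoms, now transferred to a different aperture.
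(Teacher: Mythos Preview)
Your proof is correct and follows essentially the same strategy as the paper: reduce to atoms via Theorem~\ref{thm:atomic} and bound each atom in the larger-aperture norm using H\"older's inequality together with the doubling property (Lemma~\ref{lem:doubling}) for admissible balls. Your version is in fact slightly more streamlined than the paper's, working directly with $J_\a$ rather than first substituting $t\mapsto t/\a$, and obtaining the norm estimate from the $\ell^1$ bound on the atomic coefficients instead of invoking the closed graph theorem.
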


\begin{proof} 
It is clear that $T_{\a}^{1,q}(\g) \subseteq T_{\a_0}^{1,q}(\g)$, so it 
suffices to show that $T_{\a_0}^{1,q}(\g) \subseteq T_\a^{1,q}(\g)$.
For this  it suffices to show that $J_\a f \in 
L^1(\R^n, d\g(x); L^q(D,d\gamma(y)\frac{dt}{t}))$ whenever $f\in T_{\a_0}^{1,q}(\g)$. 
Now, by the doubling property
(noting that $(y,t)\in D$ implies $B(y,t)\in\B_1$),
\begin{align*} 
\n J_\a f\n_{L^1(\R^n,d\g(x); L^q(D,  \frac{d\g(y)dt}{t}))} 
& = 
\int_{\R^n} \Big(\int_D \frac{\one_{B(y,\a t)}(x)}{{\gamma(B(y,t))}}|f(y,t)|^q
d\gamma(y)\frac{dt}{t}\Big)^\frac1q d\g(x)
\\ & = 
\int_{\R^n} \Big(\int_{\tilde D} \frac{\one_{B(y,t)}(x)}{{\gamma(B(y,
t/\a))}}|f(y,t/\a)|^q d\gamma(y)\frac{dt}{t}\Big)^\frac1q d\g(x)
\\ & \lesssim
\int_{\R^n} \Big(\int_{\tilde D} \frac{\one_{B(y, t)}(x)}{{\gamma(B(y,
t))}}|f(y,t/\a)|^q d\gamma(y)\frac{dt}{t}\Big)^\frac1q d\g(x)
\\& = \n J \tilde f \n_{L^1(\R^n,d\g(x); L^q(\tilde D,
d\gamma(y)\frac{dt}{t}))},
\end{align*}
where $\tilde D := \{(y,t)\in \R^n\times (0,\infty): \ (y,\a^{-1} t)\in D\}$, and
$\tilde f(y,t) := f(y, \a^{-1}t)$. To
prove the theorem, it thus suffices to show that 
\begin{equation}
\label{eq:tildeJ}\begin{aligned}
\n J \tilde f \n_{L^1(\R^n,d\g(x); L^q(\tilde D, d\gamma(y)\frac{dt}{t}))}
 \lesssim \n J_{\a_0}  f \n_{L^1(\R^n,d\g(x); L^q(D, d\gamma(y)\frac{dt}{t}))}
\end{aligned}
\end{equation}
for $f \in T^{1,q}(\gamma).$

Suppose $a$ is a $T^{1,q}(\g)$ $\alpha_0$-atom $a.$ Then $a$ is supported in 
$T_1(B)\cap D$ for some ball $B=B(c,r) \in
\B_{\alpha_0}$. Then
$\tilde a(y,t) := a(y, \a^{-1}t)$ is supported in $\tilde T_1(B)\cap \tilde
D$, where $\tilde T_1(B):= \{(y,t)\in \R^n\times (0,\infty): \
(y,t/\a)\in T_1(B)\}$. Using that $(y,t)\in \tilde T_1(B)$ and $x\in
B(y,t)$ imply $x\in B(c,\a r)$,  the doubling property for admissible balls
gives 
\begin{align*}
\ & \int  _{\R^{n}} \Big(\iint _{\tilde D}
\frac{\one_{B(y,t)}(x)}{\gamma(B(y,t))}
|a(y,t/\a)|^{q}d\gamma(y)\frac{dt}{t}\Big)^{\frac{1}{q}}d\gamma(x) \\
& \qquad\le \int  _{\R^{n}} \Big(\iint_{\tilde D}
\frac{\one_{B(y,t)}(x)}{\gamma(B(y,t))}
 |a(y,t/\a)|^{q}d\gamma(y)\frac{dt}{t}\Big)^{\frac{1}{q}} \one_{B(c,\a
r)}(x) \,d\gamma(x) \\
& \qquad \leq   \Big(\int_{\R^n}\iint_{\tilde D}
\frac{\one_{B(y,t)}(x)}{\gamma(B(y,t))}
|a(y,t/\a)|^{q}d\gamma(y)\frac{dt}{t}\,d\gamma(x)\Big)^\frac1q
{\gamma(B(c,\a r))}^{\frac{1}{q'}}
\\ & \qquad = \Big(\iint_{\tilde D}
|a(y,t/\a)|^{q}d\gamma(y)\frac{dt}{t}\Big)^\frac1q{\gamma(B(c,\a
r))}^{\frac{1}{q'}}
\\ & \qquad \lesssim \Big(\iint_{\tilde D}
|a(y,t/a)|^{q}d\gamma(y)\frac{dt}{t}\Big)^\frac1q{\gamma(B(c,r))}^{\frac{
1}{q'}}
\\ & \qquad =\Big(\iint_{D}
|a(y,t)|^{q}d\gamma(y)\frac{dt}{t}\Big)^\frac1q{\gamma(B(c,
r))}^{\frac{1}{q'}}
\\ & \qquad \leq 1.
\end{align*}
This shows that $ J\tilde a$ belongs to $L^1(\R^n,d\g(x); L^{q}(\tilde
D,d\gamma(y)\frac{dt}{t}))$ with norm $ \lesssim 1$.

An appeal to Theorem \ref{thm:atomic} now shows that $ J \tilde f\in
L^1(\R^n,d\g(x); L^{q}(\tilde D,d\gamma(y)\frac{dt}{t}))$ for all $f\in T^{1,q}(\g)$. 
The estimate \eqref{eq:tildeJ} then follows from the closed graph theorem. 
This completes the proof.
\end{proof}

\noindent
{\em Acknowledgment} -- We thank the anonymous referee for his/her very careful and interesting comments.

\end{document}